\newtheorem{thm}{Theorem}
\newtheorem{cor}{Corollary}
\newtheorem{lemma}{Lemma}
\newtheorem{prop}{Proposition}
\newtheorem{defn}{Definition}
\newtheorem{remark}{Remark}
\newtheorem{nt}{Notation}
\begin{document}

\title[An alternative basis for the Kauffman bracket skein module of the Solid Torus via braids]
  {An alternative basis for the Kauffman bracket skein module of the Solid Torus via braids}

\author{Ioannis Diamantis}
\address{ International College Beijing,
China Agricultural University,
No.17 Qinghua East Road, Haidian District,
Beijing, {100083}, P. R. China.}
\email{ioannis.diamantis@hotmail.com}

\keywords{Kauffman bracket polynomial, skein modules, solid torus, Temperley--Lieb algebra of type B, mixed links, mixed braids, lens spaces. }

\subjclass[2010]{57M27, 57M25, 20F36, 20F38, 20C08}

\setcounter{section}{-1}

\date{}

\begin{abstract}
In this paper we give an alternative basis, $\mathcal{B}_{\rm ST}$, for the Kauffman bracket skein module of the solid torus, ${\rm KBSM}\left({\rm ST}\right)$. The basis $\mathcal{B}_{\rm ST}$ is obtained with the use of the Tempereley--Lieb algebra of type B and it is appropriate for computing the Kauffman bracket skein module of the lens spaces $L(p, q)$ via braids.
\end{abstract}

\maketitle

\section{Introduction and overview}\label{intro}

Skein modules were independently introduced by Przytycki \cite{P} and Turaev \cite{Tu} as generalizations of knot polynomials in $S^3$ to knot polynomials in arbitrary 3-manifolds. The essence is that skein modules are quotients of free modules over ambient isotopy classes of links in 3-manifolds by properly chosen local (skein) relations.

\begin{defn}\rm
Let $M$ be an oriented $3$-manifold and $\mathcal{L}_{{\rm fr}}$ be the set of isotopy classes of unoriented framed links in $M$. Let $R=\mathbb{Z}[A^{\pm1}]$ be the Laurent polynomials in $A$ and let $R\mathcal{L}_{{\rm fr}}$ be the free $R$-module generated by $\mathcal{L}_{{\rm fr}}$. Let $\mathcal{S}$ be the ideal generated by the skein expressions $L-AL_{\infty}-A^{-1}L_{0}$ and $L \bigsqcup {\rm O} - (-A^2-A^{-1})L$, where $L_{\infty}$ and $L_{0}$ are represented schematically by the illustrations in Figure~\ref{skein}. Note that blackboard framing is assumed. 

\begin{figure}[!ht]
\begin{center}
\includegraphics[width=1.9in]{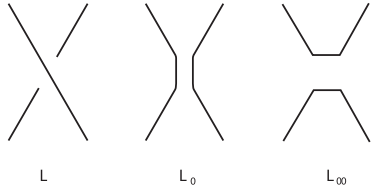}
\end{center}
\caption{The links $L$, $L_{0}$ and $L_{\infty}$ locally.}
\label{skein}
\end{figure}

\noindent Then the {\it Kauffman bracket skein module} of $M$, KBSM$(M)$, is defined to be:

\begin{equation*}
{\rm KBSM} \left(M\right)={\raise0.7ex\hbox{$
R\mathcal{L} $}\!\mathord{\left/ {\vphantom {R\mathcal{L_{{\rm fr}}} {\mathcal{S} }}} \right. \kern-\nulldelimiterspace}\!\lower0.7ex\hbox{$ S  $}}.
\end{equation*}

\end{defn}

 In \cite{Tu} the Kauffman bracket skein module of the solid torus, ST, is computed using diagrammatic methods by means of the following theorem:

\begin{thm}[\cite{Tu}]
The Kauffman bracket skein module of ST, KBSM(ST), is freely generated by an infinite set of generators $\left\{x^n\right\}_{n=0}^{\infty}$, where $x^n$ denotes a parallel copy of $n$ longitudes of ST and $x^0$ is the affine unknot (see Figure~\ref{tur}).
\end{thm}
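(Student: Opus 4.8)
The plan is to establish the two halves of the statement separately: first that the family $\{x^n\}_{n\ge 0}$ spans ${\rm KBSM}({\rm ST})$ over $R=\Z[A^{\pm1}]$, and then that it is $R$-linearly independent. Throughout I view ${\rm ST}$ as $\mathcal{A}\times I$, where $\mathcal{A}$ is an annulus, so that every framed link may be isotoped to a blackboard-framed diagram on $\mathcal{A}$, and I write $\delta=-A^2-A^{-2}$ for the scalar by which a disjoint trivial circle is absorbed via the second skein relation. For spanning I argue by induction on the number of crossings of a diagram $D\subset\mathcal{A}$. If $D$ has a crossing, the relation $L=AL_\infty+A^{-1}L_0$ rewrites $[D]$ as an $R$-combination of the two smoothings, each with one fewer crossing, which lie in the span by induction. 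The base case is a crossingless diagram, i.e.\ a disjoint union of embedded simple closed curves in $\mathcal{A}$. Each such curve is either trivial (bounds a disk in $\mathcal{A}$) or essential (isotopic to the core); every trivial component is removed at the cost of a factor $\delta$, and since any two disjoint essential curves in an annulus are parallel, the remaining $k$ essential components together are isotopic to $x^k$. Hence $[D]\in\sum_n R\,x^n$.

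The independence — which I expect to be the crux — I would obtain by producing an $R$-linear invariant that already separates the $x^n$ before passing to the quotient. Concretely, I would define an annular refinement of the Kauffman bracket as a state sum valued in the polynomial ring $R[z]$: for a diagram $D$ with crossing set $X$, set
\[
\langle D\rangle_{\mathcal{A}}=\sum_{s\colon X\to\{0,\infty\}}A^{\sigma(s)}\,\delta^{\,t(s)}\,z^{\,e(s)},
\]
where $\sigma(s)$ is the usual exponent recording the $A^{\pm1}$ smoothing weights, and $t(s)$, $e(s)$ count respectively the trivial and essential loops of the fully smoothed state $s$. This refinement is legitimate precisely because resolving all crossings of $D$ yields disjoint simple closed curves in $\mathcal{A}$, each unambiguously trivial or essential, so the exponents $t(s)$ and $e(s)$ are well defined.

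It then remains to check that $\langle\cdot\rangle_{\mathcal{A}}$ is a well-defined invariant of framed links in ${\rm ST}$ and that it descends to ${\rm KBSM}({\rm ST})$. Invariance under the framed Reidemeister moves (II, III, and the framed version of I) follows from the standard local bracket computation: each move is supported in a disk, so it neither creates nor destroys essential loops, and all cancellations occur among states sharing a common value of $e(s)$; thus the $z$-graded refinement inherits invariance move by move. By construction $\langle\cdot\rangle_{\mathcal{A}}$ satisfies both defining skein relations, so it factors through an $R$-linear map ${\rm KBSM}({\rm ST})\to R[z]$. A direct evaluation gives $\langle x^n\rangle_{\mathcal{A}}=z^n$ for $n\ge 1$ and $\langle x^0\rangle_{\mathcal{A}}=\delta$; in every case the images are pairwise distinct monomials scaled by nonzero elements of the domain $R$, hence $R$-linearly independent, and therefore so is $\{x^n\}_{n\ge 0}$. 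The main obstacle is exactly this last verification: one must be sure that the trivial/essential dichotomy is stable under the moves, i.e.\ that no Reidemeister move can convert an essential loop into a collection of trivial loops or conversely, which is what the locality of the moves guarantees. Combining spanning with independence yields that $\{x^n\}_{n=0}^{\infty}$ is a free basis, as claimed.
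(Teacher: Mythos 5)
This theorem is not proved in the paper at all: it is quoted as background from Turaev's work \cite{Tu}, and the whole point of the present paper is to take it as input and re-derive a \emph{different} basis via braids, Hecke/Temperley--Lieb algebras and Markov traces. So your proposal cannot coincide with ``the paper's proof''; judged on its own merits, it is correct and is essentially the classical diagrammatic argument (in the spirit of Turaev and of Przytycki's computation of skein modules of $F\times I$). The spanning half is the standard induction on crossing number, using that disjoint simple closed curves in an annulus are each either null-homotopic or parallel to the core. The independence half is the real content, and your device --- the state sum in $\mathbb{Z}[A^{\pm 1}][z]$ that weights trivial loops of a state by $\delta=-A^{2}-A^{-2}$ and essential loops by $z$ --- is exactly the kind of refined bracket that makes this work: since every Reidemeister move (framed I, II, III) is supported in a disk, the loops created or cancelled in the local computation are necessarily trivial, so the usual cancellation $A^{2}+A^{-2}+\delta=0$ happens within a fixed $z$-degree and the map descends to an $R$-linear map $\mathrm{KBSM}({\rm ST})\to \mathbb{Z}[A^{\pm1}][z]$ sending $x^{n}\mapsto z^{n}$ ($n\geq 1$) and $x^{0}\mapsto\delta$; as $\mathbb{Z}[A^{\pm1}]$ is an integral domain, these images are $R$-independent. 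Two cosmetic points you should make explicit: (i) with the paper's convention that $x^{0}$ is the affine unknot (rather than the empty link), a crossingless diagram all of whose components are trivial should be reduced to $\delta^{j}x^{0}$ by removing all but one circle; (ii) in the removal of trivial circles one should note that an innermost-disk or push-off argument shows each null-homotopic component bounds a disk in the complement of the rest, so the second skein relation really applies. What your route buys is a short, self-contained proof of freeness; what the paper's (algebraic) route buys is a formulation of the module in terms of mixed braids and traces, which is what is needed there to pass to the lens spaces $L(p,q)$.
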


\begin{figure}
\begin{center}
\includegraphics[width=5in]{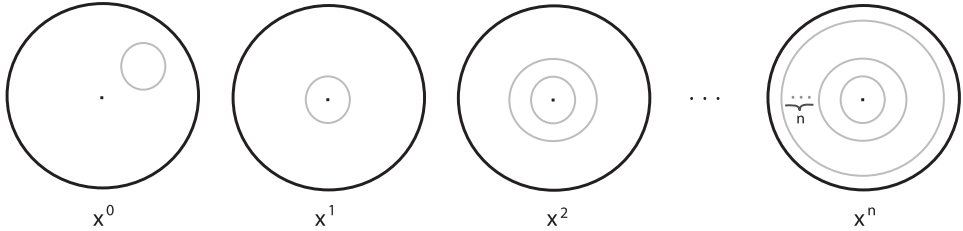}
\end{center}
\caption{The Turaev basis of KBSM(ST).}
\label{tur}
\end{figure}

\smallbreak

In \cite{La2} the most generic analogue of the HOMFLYPT polynomial, $X$, for links in the solid torus $\rm ST$ has been derived from the generalized Hecke algebras of type $\rm B$, $\textrm{H}_{1,n}$, which is related to the knot theory of the solid torus and the Artin group of Coxeter group of type B, $B_{1, n}$, via a unique Markov trace constructed on them. As explained in \cite{La2, DL2}, the Lambropoulou invariant $X$ recovers the HOMFLYPT skein module of ST, $\mathcal{S}({\rm ST})$, and is appropriate for extending the results to the lens spaces $L(p,q)$, since the combinatorial setting is the same as for $\rm ST$, only the braid equivalence includes the braid band moves (shorthanded to bbm), which reflect the surgery description of $L(p,q)$. In \cite{FG} the same procedure is applied for the case of Temperley-Lieb algebras of type B and an invariant $V^{{\rm B}}$ for knots and links in ST is constructed, via a unique Markov trace constructed on them, and which is the analogue of the Kauffman bracket polynomial for knots and links in ST. 

\smallbreak

In this paper the Kauffman bracket skein module of ST, ${\rm KBSM}\left({\rm ST}\right)$, is computed using braids and algebraic techniques developed in \cite{LR1, LR2, La1, La2, DL1, DL2, DLP, DL4, DL5} and \cite{FG}. The motivation of this work is the computation of ${\rm KBSM}\left(L(p, q)\right)$ via algebraic means. The new basic set is described in Eq.~\ref{basis} in terms of mixed braids (that is, classical braids with the first strand identically fixed). For an illustration see bottom part of Figure~\ref{allbases}.

\smallbreak

Our main result is the following:

\begin{thm}\label{newbasis}
The following set forms a basis for KBSM(ST):
\begin{equation}\label{basis}
\mathcal{B}_{\rm ST}\ =\ \{t^{n},\ n \in \mathbb{N} \}.
\end{equation}
\end{thm}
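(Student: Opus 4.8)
\emph{Proof plan.}
The plan is to deduce the statement from Turaev's theorem by exhibiting an invertible change of basis between the proposed set $\mathcal{B}_{\rm ST}=\{t^{n}\}_{n\in\N}$ and the Turaev generating set $\{x^{n}\}_{n\ge 0}$. Since Turaev's theorem already guarantees that $\{x^{n}\}$ is a free $R$-basis of ${\rm KBSM}({\rm ST})$ over $R=\Z[A^{\pm1}]$, it suffices to show that each $t^{n}$ can be written as an $R$-linear combination of the $x^{m}$ in such a way that the resulting (infinite) transition matrix is invertible over $R$; the spanning and the independence of $\mathcal{B}_{\rm ST}$ then both follow at once.

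First I would make precise the geometric meaning of $t^{n}$ as the closure of the $n$-th power of the looping generator $t$ of the type B braid setting: its diagram is a single curve winding $n$ times around the fixed strand, and hence carrying self-crossings relative to the disjoint-parallel picture. Applying the Kauffman bracket skein relation $L=AL_{\infty}+A^{-1}L_{0}$ repeatedly at these self-crossings resolves $t^{n}$ into a combination of families of parallel longitudes, i.e.\ into the $x^{m}$. The central claim, which I would establish by induction on $n$, is the triangular expansion
\[
t^{n}\ =\ A^{c_{n}}\,x^{n}\ +\ \sum_{m<n}\lambda_{n,m}\,x^{m},\qquad c_{n}\in\Z,\ \lambda_{n,m}\in R,
\]
where the leading term $A^{c_{n}}x^{n}$ arises from the resolution keeping all $n$ strands parallel, while the remaining terms, produced by the merging resolutions, involve strictly fewer copies (indeed only copies of the same parity as $n$, since the $\Z/2$-homology class is preserved under smoothing). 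The induction step amounts to peeling off one loop via $t^{n}=t\cdot t^{n-1}$, resolving the crossings created against the already-reduced $t^{n-1}$, and bookkeeping the lower-order corrections; this is essentially a Chebyshev-type recursion.

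Granting the triangular expansion, the transition matrix $(\lambda_{n,m})$ is lower triangular with diagonal entries $A^{c_{n}}$, which are units in $R$. Hence it is invertible over $R$, its inverse is again lower triangular with finitely supported rows, and so each $x^{m}$ is a finite $R$-linear combination of the $t^{n}$; consequently $\mathcal{B}_{\rm ST}$ spans ${\rm KBSM}({\rm ST})$ and is $R$-linearly independent, i.e.\ it is a basis. The main obstacle I anticipate is precisely the inductive skein computation controlling the lower-order terms: one must verify carefully that every resolution other than the all-parallel one yields strictly fewer copies, so that the triangularity is genuine, and that the leading coefficient is a unit rather than vanishing. A secondary technical point is to confirm, through the braiding and Markov-type machinery of the cited works and the Temperley--Lieb algebra of type B, that the braid-defined element $t^{n}$ really represents the winding curve described above, so that the skein resolution used here is legitimate.
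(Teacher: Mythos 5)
Your proposal is correct, but it takes a genuinely different route from the paper. You work diagrammatically inside the annulus: expand each $t^{n}$ (the single curve winding $n$ times) in Turaev's basis $\{x^{m}\}$ by resolving its self-crossings, obtain a lower-triangular transition matrix with diagonal entries that are powers of $A$ (up to sign), and invoke invertibility of such an infinite triangular matrix over $\mathbb{Z}[A^{\pm 1}]$ to get spanning and linear independence simultaneously. Your key claim is indeed a true and standard fact: resolving one crossing of the standard $n$-winding diagram yields a two-term, Chebyshev-type recursion of the form $t^{n}= A\,x\,t^{n-1}-A^{2}\,t^{n-2}$ (for $n\geq 3$; the $n=2$ case produces a trivial circle instead of a kinked curve), from which the triangular expansion with unit leading coefficient $A^{n-1}$ follows by induction, and your parity observation is also correct. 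The paper instead stays entirely in the braid-algebraic setting: it starts from the braid form $tt^{\prime}_1\cdots t^{\prime}_n$ of the Turaev basis, uses the machinery of \cite{DL2} (the sets $\Sigma$ and $\Lambda$, the ordering of Definition~\ref{order}, conjugation, stabilization and the handling of gaps and braiding tails, culminating in Corollary~\ref{b'tol}) to rewrite these as the homologous word $tt_1\cdots t_n$ plus lower-order terms, and then uses the defining relations (\ref{ideal}) of the Temperley--Lieb algebra of type B to push everything down to powers of $t$ (Proposition~\ref{ltob1}, Lemmas~\ref{lem1}, \ref{lem4} and Theorem~\ref{main1}); note that the paper's triangular matrix goes in the opposite direction to yours (old basis expanded in the new set), so spanning is immediate there but linear independence requires a separate argument (via $\pi_1({\rm ST})$ and the invariant $V^{{\rm B}}$), whereas in your direction the unit diagonal gives independence for free. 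What your route buys: it is elementary, self-contained, and works directly over $\mathbb{Z}[A^{\pm1}]$ without the trace parameters $u, v, z$ of the algebraic setting. What the paper's route buys: the whole computation lives in the mixed-braid/${\rm TL}^{\rm B}_n$ framework, which is precisely what is needed for the motivating application, namely computing ${\rm KBSM}\left(L(p,q)\right)$ via braid band moves acting on the algebraic basis; a purely diagrammatic change of basis would not set up that machinery. One step you should tighten: the induction as phrased (``peel off one loop via $t^{n}=t\cdot t^{n-1}$ and resolve against the already-reduced $t^{n-1}$'') is awkward, because once $t^{n-1}$ has been rewritten as a combination of $x^{m}$'s the extra loop no longer meets it in crossings in a standard position; the clean implementation is to resolve a single self-crossing of the standard diagram of $t^{n}$ \emph{before} any reduction, which produces the two-term recursion above and makes the bookkeeping of lower-order terms automatic.
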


The method for obtaining the basis $\mathcal{B}_{\rm ST}$, is the following:

\begin{itemize}
\item[$\bullet$] We start from elements in the standard basis of KBSM(ST), $\mathcal{B}_{{\rm ST}}^{\prime}$, presented in \cite{Tu}. Then, following the technique in \cite{DL2}, we express these elements into sums of elements in the $\Lambda$, using conjugation and stabilization moves. As shown in \cite{DL2}, the set $\Lambda$ (see Remark~\ref{lam}), forms a basis for the HOMFLYPT skein module of the solid torus. 
\item[$\bullet$] We then express elements in $\Lambda$ to sums of elements in $\mathcal{B}_{{\rm ST}}$, using conjugation, stabilization moves and the Kauffman bracket skein relation.
\item[$\bullet$] We relate the two sets $\mathcal{B}_{{\rm ST}}^{\prime}$ and $\mathcal{B}_{{\rm ST}}$ via an infinite lower triangular matrix and conclude that the set $\mathcal{B}_{{\rm ST}}$ forms a basis for KBSM(ST).
\end{itemize}

The paper is organized as follows: In \S\ref{basics} we recall the setting and the essential techniques and results from \cite{La1, La2, LR1, LR2, DL1}. More precisely, we present isotopy moves for knots and links in ST and we then describe braid equivalence for knots and links in ST. We also present results from \cite{La2} and \cite{FG} and in particular we present the basis of the Kauffman bracket skein module of ST in terms of braids and braid groups of type B. In \S\ref{kbsm} we present results from \cite{DL2} that are crucial for this paper, and using these results, in \S\ref{pr} we present a new basis for the Kauffman bracket skein module of the solid torus ST, $\mathcal{B}_{{\rm ST}}$. As explained in the beginning of \S\ref{kbsm}, the importance of the basis $\mathcal{B}_{{\rm ST}}$ lies in the fact that the {\it braid band moves} or {\it slide moves} (that reflect isotopy in the lens spaces $L(p, q)$) are naturally described via $\mathcal{B}_{{\rm ST}}$. Finally in \cite{D} and starting from $\mathcal{B}_{{\rm ST}}$, the computation of the Kauffman bracket skein module of the lens spaces is presented.

\bigbreak

\noindent \textbf{Acknowledgments}\ \ The author would like to acknowledge several discussions with Professor Sofia Lambropoulou. Moreover, financial support by the China Agricultural University, International College Beijing is gratefully acknowledged.

\section{Preliminaries}\label{basics}

\subsection{Mixed links and isotopy in ST}
 
We consider ST to be the complement of a solid torus in $S^3$. As explained in \cite{LR1, LR2, DL1}, an oriented link $L$ in ST can be represented by an oriented \textit{mixed link} in $S^{3}$, that is a link in $S^{3}$ consisting of the unknotted fixed part $\widehat{I}$ representing the complementary solid torus in $S^3$ and the moving part $L$ that links with $\widehat{I}$. A \textit{mixed link diagram} is a diagram $\widehat{I}\cup \widetilde{L}$ of $\widehat{I}\cup L$ on the plane of $\widehat{I}$, where this plane is equipped with the top-to-bottom direction of $I$ (see right hand side of Figure~\ref{bmov}).

\smallbreak

Consider now an isotopy of an oriented link $L$ in ST. As the link moves in ST, its corresponding mixed link will change in $S^3$ by a sequence of moves that keep the oriented $\widehat{I}$ point-wise fixed. This sequence of moves consists in isotopy in the $S^3$ and the {\it mixed Reidemeister moves}. In terms of diagrams we have the following result for isotopy in ST:

\smallbreak

{\it
The mixed link equivalence in $S^3$ includes the classical Reidemeister moves and the mixed Reidemeister moves, which involve the fixed and the standard part of the mixed link, keeping $\widehat{I}$ pointwise fixed.
}

\subsection{Mixed braids and braid equivalence for knots and links in ST}

By the Alexander theorem for knots and links in the solid torus (cf. Thm.~1 \cite{La2}), a mixed link diagram $\widehat{I}\cup \widetilde{L}$ of $\widehat{I}\cup L$ may be turned into a \textit{mixed braid} $I\cup \beta$ with isotopic closure. This is a braid in $S^{3}$ where, without loss of generality, its first strand represents $\widehat{I}$, the fixed part, and the other strands, $\beta$, represent the moving part $L$. The subbraid $\beta$ is called the \textit{moving part} of $I\cup \beta$ (see left hand side of Figure~\ref{bmov}).

\begin{figure}
\begin{center}
\includegraphics[width=2.3in]{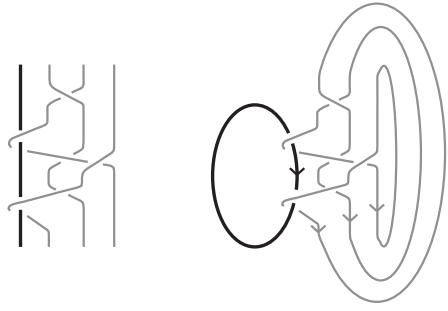}
\end{center}
\caption{The closure of a mixed braid to a mixed link.}
\label{bmov}
\end{figure}

\smallbreak

The sets of braids related to ST form groups, which are in fact the Artin braid groups of type B, denoted $B_{1,n}$, with presentation:

\[ B_{1,n} = \left< \begin{array}{ll}  \begin{array}{l} t, \sigma_{1}, \ldots ,\sigma_{n-1}  \\ \end{array} & \left| \begin{array}{l}
\sigma_{1}t\sigma_{1}t=t\sigma_{1}t\sigma_{1} \ \   \\
 t\sigma_{i}=\sigma_{i}t, \quad{i>1}  \\
{\sigma_i}\sigma_{i+1}{\sigma_i}=\sigma_{i+1}{\sigma_i}\sigma_{i+1}, \quad{ 1 \leq i \leq n-2}   \\
 {\sigma_i}{\sigma_j}={\sigma_j}{\sigma_i}, \quad{|i-j|>1}  \\
\end{array} \right.  \end{array} \right>, \]

\noindent where the generators $\sigma _{i}$ and $t$ are illustrated in Figure~\ref{genh}(i).


Let now $\mathcal{L}$ denote the set of oriented knots and links in ST. Then, isotopy in ST is translated on the level of mixed braids by means of the following theorem:

\begin{thm}[Theorem~5, \cite{LR2}] \label{markov}
 Let $L_{1} ,L_{2}$ be two oriented links in $L(p,1)$ and let $I\cup \beta_{1} ,{\rm \; }I\cup \beta_{2}$ be two corresponding mixed braids in $S^{3}$. Then $L_{1}$ is isotopic to $L_{2}$ in $L(p,1)$ if and only if $I\cup \beta_{1}$ is equivalent to $I\cup \beta_{2}$ in $\mathcal{B}$ by the following moves:
\[ \begin{array}{clll}
(i)  & Conjugation:         & \alpha \sim \beta^{-1} \alpha \beta, & {\rm if}\ \alpha ,\beta \in B_{1,n}. \\
(ii) & Stabilization\ moves: &  \alpha \sim \alpha \sigma_{n}^{\pm 1} \in B_{1,n+1}, & {\rm if}\ \alpha \in B_{1,n}. \\
(iii) & Loop\ conjugation: & \alpha \sim t^{\pm 1} \alpha t^{\mp 1}, & {\rm if}\ \alpha \in B_{1,n}. \\
\end{array} \]

\end{thm}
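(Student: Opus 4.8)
The plan is to prove both implications separately, treating the geometric ``only if'' direction as the realisation of each listed move by an ambient isotopy in $L(p,1)$, and reserving the ``if'' direction — which carries all the real content — for an $L$-move/surgery argument. Throughout I think of $L(p,1)$ as the result of $p$-surgery along the core of the complementary solid torus, i.e.\ along the unknot $\widehat{I}$, so that $\mathrm{ST}=S^3\setminus\widehat{I}$ sits inside $L(p,1)$ and the re-glued surgery solid torus $V$ differs from its $S^3$ position by $p$ meridional Dehn twists. A link in $L(p,1)$ is presented by a mixed link avoiding the surgery core, and a generic $L(p,1)$-isotopy meets $\partial V$ transversally in finitely many instants, between which the link is carried by an isotopy of $\mathrm{ST}$.

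For the ``only if'' direction I would check that each move is induced by an isotopy of the closed mixed link keeping $\widehat{I}$ pointwise fixed and \emph{not} crossing $\partial V$. Conjugation $\alpha\sim\beta^{-1}\alpha\beta$ and loop conjugation $\alpha\sim t^{\pm1}\alpha t^{\mp1}$ produce closures that are ambient isotopic in $\mathrm{ST}$, hence in $L(p,1)$, since conjugate braids have isotopic closures and $t\in B_{1,n}$; geometrically, loop conjugation records dragging the closed moving part once around $\widehat{I}$ while remaining in the complement. Stabilization $\alpha\sim\alpha\sigma_n^{\pm1}$ is a Markov move, realised by pulling a small trivial arc across a fresh strand without touching $\widehat{I}$. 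Since none of these three isotopies crosses $V$, each is valid in $L(p,1)$ and this direction follows.

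For the ``if'' direction I would proceed in three steps. First, the Alexander theorem for $\mathrm{ST}$ (cf.\ Thm.~1 of \cite{La2}) braids any mixed-link representative into a mixed braid $I\cup\beta$ with $\beta\in B_{1,n}$ and first strand fixed as $\widehat{I}$. Second, I invoke Lambropoulou's $L$-move calculus: two mixed braids whose closures are isotopic in $\mathrm{ST}$ (equivalently in $S^3$ rel $\widehat{I}$) are connected by finitely many $L$-moves, and each $L$-move decomposes into the conjugation and stabilization moves (i) and (ii), while the loop around the fixed strand $\widehat{I}$ forced by the puncture contributes exactly the loop conjugation (iii); this settles the portions of the isotopy lying in $\mathrm{ST}$. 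Third, and this is the genuinely $L(p,1)$-specific step, I must account for the finitely many transverse passages of a strand across the surgery torus $\partial V$, which are precisely the band moves (handle slides) over the surgery component $\widehat{I}$.

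The main obstacle is this last step. A strand pushed through $V$ and returned is acted on by the monodromy of the $p$-twisted re-gluing, so it comes back wound $p$ extra times around $\widehat{I}$; after braiding, such a slide produces a factor conjugate to $t^{\pm p}$ together with bookkeeping crossings running along the strand. The crux of the argument is to show that every such surgery slide, put in general position so that it meets $\partial V$ in simple arcs, reorganises into a finite composition of the three listed moves — the winding around $\widehat{I}$ handled through loop conjugations (iii), the crossing bookkeeping through conjugation (i) and stabilization (ii), and the surgery coefficient $p$ entering through the complexity of the required reduction. Establishing this reduction uniformly in the number and order of the crossings, while keeping $\widehat{I}$ pointwise fixed throughout, is the delicate point; once it is in hand, concatenating the $L$-move reduction of the $\mathrm{ST}$-portions with the slide reduction of the $\partial V$-crossings expresses the entire $L(p,1)$-isotopy as a finite product of moves (i)--(iii), completing the proof.
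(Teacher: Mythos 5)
This theorem is never proved in the paper: it is imported verbatim as Theorem~5 of \cite{LR2}, so there is no internal proof to compare against, and your proposal must be judged against the content of that cited result. The part of your argument that works is the easy direction (each of the moves (i)--(iii) is realized by an isotopy in ${\rm ST}\subset L(p,1)$) together with the L-move calculus handling the portions of an isotopy supported in ${\rm ST}$. The fatal problem is your third step, the one you yourself flag as ``the delicate point'': the claim that a slide of a strand across the surgery torus $\partial V$ can be rewritten as a finite composition of conjugation, stabilization and loop conjugation. This is not merely delicate; it is impossible. The exponent sum of $t$ in a word of $B_{1,n}$ defines a homomorphism $B_{1,n}\to\mathbb{Z}$ (every defining relation of $B_{1,n}$ is balanced in $t$), and this integer is unchanged by conjugation (i), by stabilization (ii) (since $\sigma_n^{\pm 1}$ contains no $t$), and by loop conjugation (iii); geometrically it is the winding number of the closed moving part around $\widehat{I}$, i.e.\ its class in $H_1({\rm ST})\cong\mathbb{Z}$. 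A slide across the surgery solid torus changes this integer by $\pm p$. Hence two mixed braids that differ by a single slide (for instance $t$ and its slide, winding $p+1$ times around $\widehat{I}$) close to isotopic links in $L(p,1)$ yet can never be connected by moves (i)--(iii). No amount of general position or crossing bookkeeping can rescue the reduction you propose.

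What this exposes is that the statement as printed is a misquotation, and your proposal inherits the error. In \cite{LR2} the braid equivalence for links in $L(p,1)$ requires a fourth generator, the braid band moves, which are exactly the moves changing the $t$-exponent by $\pm p$; the three moves (i)--(iii) alone generate the mixed-braid equivalence for isotopy in the solid torus, which is what the surrounding text announces (``isotopy in ST is translated on the level of mixed braids by means of the following theorem'') and is the only form in which the result is used in this paper. The correct repair is therefore either to read the statement as a theorem about ${\rm ST}$ --- in which case your steps 1--2 (Alexander theorem plus L-move reduction, with no $\partial V$ crossings ever occurring) already constitute the proof and your step 3 is vacuous --- or to keep $L(p,1)$ and add the band moves to the list of moves, in which case your step 3 becomes the observation that each transverse passage through $V$ \emph{is} a band move, not a reduction of band moves to (i)--(iii). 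A minor further point: your labels are switched --- realizing the listed moves by isotopies proves the ``if'' direction of the displayed equivalence, while the L-move/surgery analysis is the ``only if'' direction.
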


\subsection{The Kauffman bracket skein module of ST via braids}\label{SolidTorus}

In \cite{La2} the most generic analogue of the HOMFLYPT polynomial, $X$, for links in the solid torus $\rm ST$ has been derived from the generalized Iwahori--Hecke algebras of type $\rm B$, $\textrm{H}_{1,n}$, via a unique Markov trace constructed on them. This algebra was defined by Lambropoulou as the quotient of ${\mathbb C}\left[q^{\pm 1} \right]B_{1,n}$ over the quadratic relations ${g_{i}^2=(q-1)g_{i}+q}$. Namely:

\begin{equation*}
\textrm{H}_{1,n}(q)= \frac{{\mathbb C}\left[q^{\pm 1} \right]B_{1,n}}{ \langle \sigma_i^2 -\left(q-1\right)\sigma_i-q \rangle}.
\end{equation*}

It is also shown that the following sets form linear bases for ${\rm H}_{1,n}(q)$ (\cite[Proposition~1 \& Theorem~1]{La2}):

\begin{equation}
\begin{array}{llll}
 (i) & \Sigma_{n} & = & \{t_{i_{1} } ^{k_{1} } \ldots t_{i_{r}}^{k_{r} } \cdot \sigma \} ,\ {\rm where}\ 0\le i_{1} <\ldots <i_{r} \le n-1,\\
 (ii) & \Sigma^{\prime} _{n} & = & \{ {t^{\prime}_{i_1}}^{k_{1}} \ldots {t^{\prime}_{i_r}}^{k_{r}} \cdot \sigma \} ,\ {\rm where}\ 0\le i_{1} < \ldots <i_{r} \le n-1, \\
\end{array}
\end{equation}
\noindent where $k_{1}, \ldots ,k_{r} \in {\mathbb Z}$, $t_0^{\prime}\ =\ t_0\ :=\ t, \quad t_i^{\prime}\ =\ g_i\ldots g_1tg_1^{-1}\ldots g_i^{-1} \quad {\rm and}\quad t_i\ =\ g_i\ldots g_1tg_1\ldots g_i$ are the `looping elements' in ${\rm H}_{1, n}(q)$ (see Figure~\ref{genh}(ii)) and $\sigma$ a basic element in the Iwahori--Hecke algebra of type A, ${\rm H}_{n}(q)$, for example in the form of the elements in the set \cite{Jo}:

$$ S_n =\left\{(g_{i_{1} }g_{i_{1}-1}\ldots g_{i_{1}-k_{1}})(g_{i_{2} }g_{i_{2}-1 }\ldots g_{i_{2}-k_{2}})\ldots (g_{i_{p} }g_{i_{p}-1 }\ldots g_{i_{p}-k_{p}})\right\}, $$

\noindent for $1\le i_{1}<\ldots <i_{p} \le n-1{\rm \; }$. In \cite{La2} the bases $\Sigma^{\prime}_{n}$ are used for constructing a Markov trace on $\mathcal{H}:=\bigcup_{n=1}^{\infty}{\rm H}_{1, n}$, and using this trace, a universal HOMFLYPT-type invariant for oriented links in ST is constructed.

\begin{figure}\label{loopttpr}
\begin{center}
\includegraphics[width=5.5in]{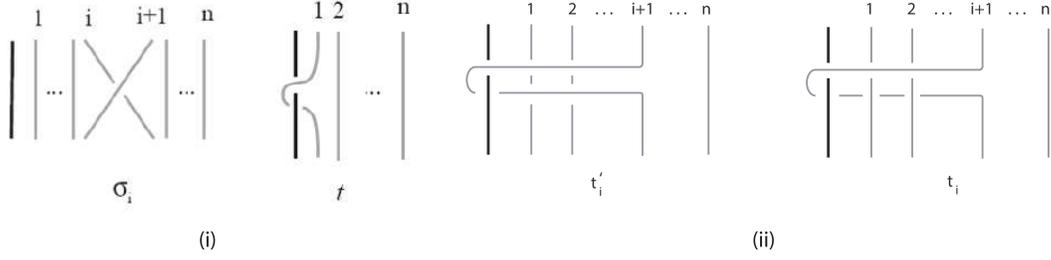}
\end{center}
\caption{The generators of $B_{1, n}$ and the `looping' elements $t^{\prime}_{i}$ and $t_{i}$.}
\label{genh}
\end{figure}

\begin{thm}{\cite[Theorem~6 \& Definition~1]{La2}} \label{tr}
Given $z, s_{k}$ with $k\in {\mathbb Z}$ specified elements in $R={\mathbb C}\left[q^{\pm 1} \right]$, there exists a unique linear Markov trace function on $\mathcal{H}$:

\begin{equation*}
{\rm tr}:\mathcal{H}  \to R\left(z,s_{k} \right),\ k\in {\mathbb Z}
\end{equation*}

\noindent determined by the rules:

\[
\begin{array}{lllll}
(1) & {\rm tr}(ab) & = & {\rm tr}(ba) & \quad {\rm for}\ a,b \in {\rm H}_{1,n}(q) \\
(2) & {\rm tr}(1) & = & 1 & \quad {\rm for\ all}\ {\rm H}_{1,n}(q) \\
(3) & {\rm tr}(ag_{n}) & = & z{\rm tr}(a) & \quad {\rm for}\ a \in {\rm H}_{1,n}(q) \\
(4) & {\rm tr}(a{t^{\prime}_{n}}^{k}) & = & s_{k}{\rm tr}(a) & \quad {\rm for}\ a \in {\rm H}_{1,n}(q),\ k \in {\mathbb Z} \\
\end{array}
\]

\bigbreak

\noindent Then, the function $X:\mathcal{L}$ $\rightarrow R(z,s_{k})$

\begin{equation*}
X_{\widehat{\alpha}} = \Delta^{n-1}\cdot \left(\sqrt{\lambda } \right)^{e}
{\rm tr}\left(\pi \left(\alpha \right)\right),
\end{equation*}

\noindent is an invariant of oriented links in {\rm ST}, where $\Delta:=-\frac{1-\lambda q}{\sqrt{\lambda } \left(1-q\right)}$, $\lambda := \frac{z+1-q}{qz}$, $\alpha \in B_{1,n}$ is a word in the $\sigma _{i}$'s and $t^{\prime}_{i} $'s, $\widehat{\alpha}$ is the closure of $\alpha$, $e$ is the exponent sum of the $\sigma _{i}$'s in $\alpha $, $\pi$ the canonical map of $B_{1,n}$ on ${\rm H}_{1,n}(q)$, such that $t\mapsto t$ and $\sigma _{i} \mapsto g_{i}$.
\end{thm}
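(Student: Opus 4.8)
The plan is to prove the statement in two stages: first establish the existence and uniqueness of the trace ${\rm tr}$ on the tower $\mathcal{H}=\bigcup_{n}{\rm H}_{1,n}(q)$, and then use the trace property together with the braid moves of Theorem~\ref{markov} to show that $X$ is a well-defined invariant. For the trace I would follow the inductive, Ocneanu-type method, exploiting the fact (cited above from \cite{La2}) that $\Sigma^{\prime}_n$ is a linear basis of ${\rm H}_{1,n}(q)$: every element is a linear combination of monomials ${t^{\prime}_{i_1}}^{k_1}\cdots {t^{\prime}_{i_r}}^{k_r}\cdot\sigma$ with $\sigma\in{\rm H}_n(q)$ a type-A basis element. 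The four rules, read from the top of the tower downward, should pin down the value of ${\rm tr}$ on each such monomial.

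I would treat uniqueness first, since it also guides the construction. Argue by induction on $n$. Given a basis monomial of $\Sigma^{\prime}_n$, isolate the part involving the index $n-1$: if the type-A factor $\sigma$ ends in $g_{n-1}$, peel it off with rule (3); if the looping part carries ${t^{\prime}_{n-1}}^{k}$, move it to the far right using the trace property (1) (cyclicity) together with the commutation relations, then remove it with rule (4). Each reduction strictly lowers either $n$ or the number of strands on which the monomial is supported, so repeated application, with the base of the induction handled by rules (2) and (4) on ${\rm H}_{1,1}$ (where ${\rm tr}(t^k)=s_k$ and ${\rm tr}(1)=1$), determines every value uniquely.

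Existence is the substantive part. I would \emph{define} ${\rm tr}$ on the basis $\Sigma^{\prime}_n$ by the values dictated by the reduction above, and then verify that rules (1)--(4) actually hold; rules (2)--(4) hold essentially by construction, so the crux is the trace property (1), ${\rm tr}(ab)={\rm tr}(ba)$. By linearity and induction it is enough to check this when $b$ runs over the generators $g_i$ and $t$ and $a$ over basis monomials, reducing to a finite family of identities that must be pushed through the defining relations of ${\rm H}_{1,n}(q)$: the quadratic relation $g_i^2=(q-1)g_i+q$, the braid relations, and---most delicately---the type-B relation $g_1 t g_1 t=t g_1 t g_1$ controlling the interaction of $t$ with $g_1$. \textbf{This consistency check is the main obstacle:} one must show the inductively prescribed values are independent of the order in which the reduction rules are applied, and the presence of infinitely many looping elements $t^{\prime}_i$ carrying arbitrary integer exponents makes the bookkeeping heavy. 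I expect this to be handled exactly as the Ocneanu trace is built for type-A Hecke algebras, with the type-B generator treated by a separate commutation and normalization argument.

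Finally, invariance of $X$. Since conjugate braids have the same number of strands $n$ and the same exponent sum $e$ of the $\sigma_i$'s, the normalizing factor $\Delta^{n-1}(\sqrt\lambda)^e$ is unchanged under conjugation, and rule (1) gives ${\rm tr}(\pi(\beta^{-1}\alpha\beta))={\rm tr}(\pi(\alpha))$; loop conjugation $\alpha\mapsto t^{\pm1}\alpha t^{\mp1}$ is the special case of conjugation by $t$, so it is covered as well. For a stabilization $\alpha\mapsto\alpha\sigma_n^{\pm1}$ the strand count rises by one and $e$ changes by $\pm1$; writing $g_n^{-1}=q^{-1}g_n-(1-q^{-1})$ via the quadratic relation and applying rule (3) expresses ${\rm tr}(\pi(\alpha)g_n^{\pm1})$ as an explicit multiple of ${\rm tr}(\pi(\alpha))$. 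Imposing that the combined factor $\Delta\cdot(\sqrt\lambda)^{\pm1}$ times this multiple equals $1$ forces precisely the stated values $\lambda=\frac{z+1-q}{qz}$ and $\Delta=-\frac{1-\lambda q}{\sqrt\lambda(1-q)}$, confirming invariance under both stabilizations and completing the proof.
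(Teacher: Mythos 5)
This theorem is not proved in the paper at all: it is quoted verbatim from \cite{La2} (Theorem~6 and Definition~1 there), so there is no in-paper argument to compare yours against. Measured against the original source, your two-stage plan --- an Ocneanu-style inductive construction of ${\rm tr}$ on the tower $\mathcal{H}$ using the bases $\Sigma^{\prime}_n$, followed by a normalization argument that makes $X$ invariant under the moves of Theorem~\ref{markov} --- is exactly the strategy of \cite{La2}, and your invariance half is correct and essentially complete: conjugation (including loop conjugation, since $t\in B_{1,n}$) is absorbed by rule~(1), and writing $g_n^{-1}=q^{-1}g_n-(1-q^{-1})$ gives ${\rm tr}(ag_n^{-1})=\lambda z\,{\rm tr}(a)$, so both stabilizations collapse to the single condition $\Delta\sqrt{\lambda}\,z=1$, which the stated $\Delta$ and $\lambda$ satisfy.

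The gap is where you flagged it, and it is genuine: the existence (i.e.\ consistency) of the trace. Asserting that this ``is handled exactly as the Ocneanu trace is built for type-A Hecke algebras'' is not a proof, and in type B the analogy is not routine. Unlike ${\rm H}_n(q)$, the algebra ${\rm H}_{1,n}(q)$ is infinite-dimensional over the ground ring: the looping elements $t^{\prime}_i$ carry arbitrary integer exponents and satisfy no quadratic (or any polynomial) relation, which is precisely why the trace needs the infinite family of parameters $s_k$, $k\in\mathbb{Z}$. The inductive step therefore rests on a structure theorem --- every element of ${\rm H}_{1,n+1}(q)$ is a linear combination of elements $a$, $a\,g_n\,b$ and $a\,{t^{\prime}_n}^{k}b$ with $a,b\in {\rm H}_{1,n}(q)$ --- and the verification that the inductively defined linear map satisfies ${\rm tr}(ab)={\rm tr}(ba)$ requires checking compatibility with the type-B relation $g_1tg_1t=tg_1tg_1$ and with the commutation of ${t^{\prime}_n}^{k}$ past all generators; this case analysis occupies the bulk of the proof in \cite{La2} and has no counterpart in Ocneanu's type-A construction. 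Note also that your uniqueness sketch quietly uses the same structure theorem: a basis monomial of $\Sigma^{\prime}_n$ whose looping part and braiding part both involve the index $n-1$ (e.g.\ ${t^{\prime}_{n-1}}^{k}g_{n-1}$) is not reduced by a one-step peeling, but needs rewriting via the defining relations before rules (3) and (4) apply with the remaining factor genuinely in ${\rm H}_{1,n-1}(q)$. So both halves of your trace argument hang on a result you neither stated nor proved, and that result is the mathematical heart of the theorem.
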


\begin{remark}\rm
As shown in \cite{La2, DL2} the invariant $X$ recovers the HOMFLYPT skein module of ST. For a survey on the HOMFLYPT skein module of the lens spaces $L(p, 1)$ via braids, the reader is referred to \cite{DL3}.
\end{remark}

\bigbreak

Following the same idea as in \cite{La2}, in \cite{FG} the analogue of the Kauffman bracket polynomial, $V$, for links in the solid torus $\rm ST$ has been derived from the Temperley-Lieb algebra of type B, $\textrm{TL}_{n}^{{\rm B}}$. This algebra is defined as a quotient of the generalized Iwahori-Hecke algebra of type B, $\textrm{H}_{1,n}(q)$, over the ideal generated by the elements:

\begin{equation}\label{ideal}
\begin{array}{lcl}
h_{1, 2} & := & 1 + u\ (\sigma_1+ \sigma_2) + u^2\ (\sigma_1\sigma_2+\sigma_2\sigma_1)+u^3\ \sigma_1\sigma_2\sigma_1,\qquad {\rm for\ all}\ 1\leq i \leq n-2\\
h_B & := & 1 + u\ \sigma_1 + v\ t + uv\ (\sigma t + t \sigma) + u^2v\ \sigma t \sigma + uv^2\ t \sigma t + (uv)^2\ \sigma t \sigma t
\end{array}
\end{equation}

Note that in \cite{FG} a different presentation for ${\rm H}_{1, n}$ is used, that involves the parameters $u, v$ and the quadratic equations 

\begin{equation}\label{quad}
{\sigma_{i}^2=(u-u^{-1})\sigma_{i}+1}.
\end{equation}

\noindent One can switch from one presentation to the other by a taking $\sigma_i = u \sigma_i$, $t = v t$ and $q = u^2$.

\smallbreak

Since the Temperley-Lieb algebra of type B is a quotient of the Iwahori-Hecke algebra of type B, in \cite{FG} the authors present the necessary and sufficient conditions so as the Markov trace factors through to ${\rm TL}_{n}^{{\rm B}}$. Indeed:

\begin{thm}{\cite[Theorem~4]{FG}} \label{tr2}
The trace defined in ${\rm H}_n(1, q)$ factors through to ${\rm TL}_{n}^{{\rm B}}$ if and only if the trace parameters take the following values:
\begin{equation}\label{parameters}
z=-\frac{1}{u(1+u^2)}, \qquad s_1=\frac{-1+v^2}{(1+u^2)v}.
\end{equation}
\end{thm}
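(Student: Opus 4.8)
The plan is to invoke the universal property of the quotient: since ${\rm TL}_n^{\rm B}$ is by definition ${\rm H}_{1,n}(q)$ modulo the two-sided ideal $\mathcal{I}_n$ generated by the elements $h_{i,i+1}$ ($1\le i\le n-2$) and $h_B$, the linear map ${\rm tr}$ descends to a trace on $\bigcup_n {\rm TL}_n^{\rm B}$ \emph{if and only if} ${\rm tr}$ annihilates $\mathcal{I}_n$ for every $n$. Using the cyclic property (1), ${\rm tr}(a h b) = {\rm tr}(h b a)$, this is equivalent to the conditions ${\rm tr}(h_{i,i+1}\,w) = 0$ and ${\rm tr}(h_B\,w) = 0$ for all $w$. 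The first task is therefore to collapse this infinite family of linear equations onto a small number of scalar identities, which I would do by exploiting the inductive tower structure carried by the Markov rules (3) and (4): these express the trace on ${\rm H}_{1,n+1}(q)$ in terms of ${\rm H}_{1,n}(q)$, and since each $h_{i,i+1}$ is the image of $h_{1,2}$ under the index shift $j \mapsto j+1$, the quasi-idempotency of the quantum symmetrizer lets me reduce every ${\rm tr}(h_{i,i+1}w)$ to a scalar multiple of ${\rm tr}(h_{1,2})$.

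For the type-A generator I would translate to the Hecke presentation via $g_i = u\,\sigma_i$, $q = u^2$ and compute ${\rm tr}(h_{1,2})$ term by term. Evaluating ${\rm tr}(g_1)$, ${\rm tr}(g_1 g_2)$, ${\rm tr}(g_2 g_1)$ and ${\rm tr}(g_1 g_2 g_1)$ by repeated use of rule (3) and the quadratic relation $g_i^2 = (q-1)g_i + q$ to absorb squares, I obtain ${\rm tr}(h_{1,2})$ as a polynomial in $z$ and $u$; the single equation ${\rm tr}(h_{1,2}) = 0$ is linear in $z$ and solves to $z = -\frac{1}{u(1+u^2)}$.

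For the type-B generator, the same substitution together with $t_{\rm H} = v\,t$ makes every power of $v$ cancel, so I expect $h_B$ to reduce to the ``full $B_2$ symmetrizer''
\begin{equation*}
h_B \ =\ 1 + g_1 + t + g_1 t + t g_1 + g_1 t g_1 + t g_1 t + g_1 t g_1 t .
\end{equation*}
Taking the trace and using rules (3), (4), cyclicity and the quadratic relation, each summand becomes explicit: for instance ${\rm tr}(g_1 t g_1) = s_1[(q-1)z + q]$, ${\rm tr}(t g_1 t) = z\,s_2$ and ${\rm tr}(g_1 t g_1 t) = (q-1)z\,s_2 + q\,s_1^2$, so that ${\rm tr}(h_B)$ is an explicit expression in $z, q, s_1, s_2$. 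After substituting the value of $z$ already found, the relation ${\rm tr}(h_B) = 0$ together with the auxiliary conditions ${\rm tr}(h_B\,t^m) = 0$ — which recursively express $s_2, s_3,\ldots$ in terms of $s_1$ — collapses to a single equation in $s_1$, yielding $s_1 = \frac{-1+v^2}{(1+u^2)v}$. The converse is immediate by reversibility: for these parameter values the trace kills $h_{1,2}$, $h_B$ and all their translates and multiples, hence the whole ideal.

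\textbf{The main obstacle} I anticipate is precisely the reduction in the first paragraph: turning ``${\rm tr}$ vanishes on the entire two-sided ideal'' into finitely many scalar conditions. On the type-A side this is the familiar argument that the Markov trace is controlled by its values on the Jones symmetrizers, but on the type-B side one must verify that the conditions coming from $h_B$ and its translates are \emph{consistent}, determining the higher looping parameters $s_k$ ($k\ge 2$) uniquely in terms of $s_1$ while fixing $s_1$ itself, rather than over-determining the trace. Controlling this interaction between the symmetrizer $h_B$ and the looping elements $t_i'$ of rule (4) is where the genuine content of the theorem lies; the individual trace computations in the last two paragraphs are routine once the reduction is in place.
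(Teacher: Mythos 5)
You should first be aware that the paper contains no proof of this statement at all: it is imported verbatim from \cite{FG} (cited as Theorem~4 there), so your proposal can only be measured against the argument in \cite{FG}. Its skeleton is indeed the one you reconstructed: the trace factors through ${\rm TL}_n^{\rm B}$ if and only if it kills the two-sided ideal $\langle h_{1,2}, h_B\rangle$, cyclicity reduces this to ${\rm tr}(h_{1,2}\,w)={\rm tr}(h_B\,w)=0$ for all $w$, and the Markov rules collapse these conditions. Your individual evaluations are also correct: in the $(g_i,t)$ presentation $h_B$ becomes the full $B_2$ sum, and ${\rm tr}(g_1tg_1)=s_1\left[(q-1)z+q\right]$, ${\rm tr}(tg_1t)=zs_2$, ${\rm tr}(g_1tg_1t)=(q-1)zs_2+qs_1^2$ are all right.

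Two steps of your plan, however, fail concretely. First, ${\rm tr}(h_{1,2})=0$ is \emph{quadratic} in $z$, not linear: in the normalization of Eq.~(\ref{quad}) one gets ${\rm tr}(h_{1,2})=u^2(1+u^2)z^2+u(2+u^2)z+1$, whose roots are $z=-\frac{1}{u(1+u^2)}$ and $z=-\frac{1}{u}$. So the single equation cannot ``solve to'' the displayed value; the solution set has several branches, of which the theorem records only the topologically relevant one (exactly what the remark following the theorem in this paper warns about). Second, and more seriously, your mechanism for determining $s_1$ cannot work. In ${\rm H}_{1,n}$ the generator $t$ satisfies \emph{no} quadratic relation (this is why the trace needs infinitely many parameters $s_k$, $k\in\mathbb{Z}$), hence $h_B$ absorbs $\sigma_1$ (one checks $\sigma_1h_B=h_B\sigma_1=uh_B$) but does \emph{not} absorb $t$. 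Consequently ${\rm tr}(h_B)=0$ already involves $s_2$, and the family you propose, ${\rm tr}(h_B\,t^m)=0$, is a \emph{triangular} system: each equation is linear in the single new parameter $s_{m+2}$ with coefficient $u^3v^2z\neq 0$, so it merely defines $s_{m+2}$ in terms of $s_1,\dots,s_{m+1}$, \emph{for every value of $s_1$}. This system can never ``collapse to a single equation in $s_1$''; followed literally, your plan would conclude that $s_1$ is a free parameter, contradicting the theorem. The constraint pinning $s_1=\frac{v^2-1}{(1+u^2)v}$ has to come from conditions ${\rm tr}(h_B\,w)=0$ in which $t$'s and $\sigma_1$'s interleave (for instance $w=t\sigma_1t$, where no absorption or Markov rule applies directly), and identifying and resolving precisely those conditions — while checking they are consistent with the triangular recursion — is the actual content of the proof in \cite{FG}. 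For the same reason your closing sentence (``the converse is immediate by reversibility'') is circular: sufficiency \emph{is} the statement that the trace vanishes on the entire ideal, i.e.\ exactly the reduction you deferred.
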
 

It is worth mentioning that in \cite{FG} more values of the trace parameters that allow the trace to factor through to ${\rm TL}_{n}^{{\rm B}}$ are presented, but as explained in \cite{FG}, only the values in (\ref{parameters}) are of topological interest. Moreover, for those values of the parameters one deduces $\lambda\ =\ u^4$. We have the following:

\begin{thm}{\cite{FG}}
The following is an invariant for knots and links in {\rm ST}:

\begin{equation*}
V^{{\rm B}}_{\widehat{\alpha}}(u ,v)\ :=\ \left(-\frac{1+u^2}{u} \right)^{n-1} \left(u\right)^{2e} {\rm tr}\left(\overline{\pi} \left(\alpha \right)\right),
\end{equation*}

\noindent where $\alpha \in B_{1,n}$ is a word in the $\sigma _{i}$'s and $t^{\prime}_{i} $'s, $\widehat{\alpha}$ is the closure of $\alpha$, $e$ is the exponent sum of the $\sigma _{i}$'s in $\alpha $, $\overline{\pi}$ the canonical map of $B_{1,n}$ on ${\rm TL}_{n}^{{\rm B}}$, such that $t\mapsto t$ and $\sigma _{i} \mapsto g_{i}$.
\end{thm}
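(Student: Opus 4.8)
The plan is to show that the scalar $V^{\rm B}_{\widehat{\alpha}}(u,v)$ depends only on the isotopy class in ST of the closure $\widehat{\alpha}$, and not on the particular mixed braid $\alpha$ representing it. Since every link in ST is the closure of a mixed braid, it suffices to check that $V^{\rm B}$ is unaffected by the braid moves generating the ST-equivalence, which by Theorem~\ref{markov} are conjugation, the stabilization moves $\alpha\mapsto\alpha\sigma_n^{\pm1}$, and loop conjugation $\alpha\mapsto t^{\pm1}\alpha t^{\mp1}$. Throughout I would work in the presentation of \cite{FG} with quadratic relation (\ref{quad}), so that $\overline{\pi}(\sigma_i)=g_i$ satisfies $g_i^2=(u-u^{-1})g_i+1$. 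The expression $\mathrm{tr}(\overline{\pi}(\alpha))$ is well defined precisely because of Theorem~\ref{tr2}: for the parameter values (\ref{parameters}) the Markov trace of Theorem~\ref{tr} vanishes on the ideal (\ref{ideal}) and hence descends to ${\rm TL}_n^{\rm B}$.

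The two conjugation-type moves are immediate. Both conjugation and loop conjugation preserve the braid index $n$ and, since $t$ and the $\beta^{\pm1}$ contribute zero (resp.\ canceling) $\sigma$-exponents, they preserve the exponent sum $e$; thus the prefactor $\left(-\frac{1+u^2}{u}\right)^{n-1}(u)^{2e}$ is unchanged. For the trace factor I would invoke the cyclic property (rule~(1) of Theorem~\ref{tr}): writing $X=\overline{\pi}(\alpha)$, one has $\mathrm{tr}(\overline{\pi}(\beta)^{-1}X\overline{\pi}(\beta))=\mathrm{tr}(X)$, and likewise $\mathrm{tr}(\overline{\pi}(t)^{\pm1}X\overline{\pi}(t)^{\mp1})=\mathrm{tr}(X)$, the latter using that $\overline{\pi}(t)$ is invertible with inverse $\overline{\pi}(t^{-1})$ since $tt^{-1}=1$ in $B_{1,n}$. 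Hence $V^{\rm B}$ is invariant under moves (i) and (iii).

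The substance of the proof is stabilization invariance, where the prefactor and the trace both change and must cancel. For positive stabilization $\alpha\mapsto\alpha\sigma_n$ we have $n\mapsto n+1$ and $e\mapsto e+1$, while rule~(3) gives $\mathrm{tr}(\overline{\pi}(\alpha)g_n)=z\,\mathrm{tr}(\overline{\pi}(\alpha))$. The net multiplicative factor is $\left(-\frac{1+u^2}{u}\right)u^2 z$, and substituting $z=-\frac{1}{u(1+u^2)}$ from (\ref{parameters}) I expect this to reduce to $1$. For negative stabilization $\alpha\mapsto\alpha\sigma_n^{-1}$ I would first use (\ref{quad}) in the form $g_n^{-1}=g_n-(u-u^{-1})$ to compute $\mathrm{tr}(\overline{\pi}(\alpha)g_n^{-1})=(z-u+u^{-1})\,\mathrm{tr}(\overline{\pi}(\alpha))$; with the same value of $z$ this collapses to $-\frac{u^3}{1+u^2}\,\mathrm{tr}(\overline{\pi}(\alpha))$. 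Combined with $n\mapsto n+1$ and $e\mapsto e-1$, the net factor is $\left(-\frac{1+u^2}{u}\right)u^{-2}\!\left(-\frac{u^3}{1+u^2}\right)$, which I again expect to equal $1$.

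The main obstacle is bookkeeping across the two presentations rather than any deep difficulty. I must make sure that the value $z=-\frac{1}{u(1+u^2)}$ of (\ref{parameters}) is exactly the constant governing $\mathrm{tr}(\,\cdot\, g_n)$ in the ${\rm TL}_n^{\rm B}$ normalization used in the formula, and that the loop factor is the Temperley--Lieb value $\Delta=-(u+u^{-1})=-\frac{1+u^2}{u}$ dictated by the quadratic relation (\ref{quad}), rather than the HOMFLYPT normalization $\Delta=-\frac{1-\lambda q}{\sqrt{\lambda}(1-q)}$ of Theorem~\ref{tr}; the framing factor coincides, since $\sqrt{\lambda}=u^2$ gives $(\sqrt{\lambda})^e=u^{2e}$. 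Once the two stabilization factors are checked to equal $1$, every generating move of the ST braid equivalence preserves $V^{\rm B}_{\widehat{\alpha}}(u,v)$, and by Theorem~\ref{markov} the quantity descends to a well-defined invariant of links in ST.
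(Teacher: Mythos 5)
The paper itself contains no proof of this statement---it is quoted directly from \cite{FG}---so there is nothing internal to compare against; your argument is the standard Jones-style verification (and is, in essence, the proof in \cite{FG}, which follows Lambropoulou's construction of $X$): well-definedness of ${\rm tr}$ on ${\rm TL}_n^{\rm B}$ via Theorem~\ref{tr2}, then invariance under the three moves of Theorem~\ref{markov}. Your computations are correct and complete as set up: with the quadratic (\ref{quad}) and $z=-\frac{1}{u(1+u^2)}$, the positive-stabilization factor is $\left(-\frac{1+u^2}{u}\right)u^{2}z=1$, and for negative stabilization $g_n^{-1}=g_n-(u-u^{-1})$ gives the trace factor $z-u+u^{-1}=-\frac{u^{3}}{1+u^{2}}$, whence $\left(-\frac{1+u^2}{u}\right)u^{-2}\left(-\frac{u^{3}}{1+u^{2}}\right)=1$. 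Your resolution of the presentation bookkeeping is also the right one: in the \cite{FG} normalization one gets $\lambda=\frac{z-(u-u^{-1})}{z}=u^{4}$, which is exactly the paper's remark that $\lambda=u^4$, and it simultaneously confirms the framing factor $u^{2e}=(\sqrt{\lambda})^{e}$ and the loop value $\Delta=-(u+u^{-1})=-\frac{1+u^2}{u}$ appearing in the stated formula.
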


\bigbreak

In the braid setting of \cite{La2}, the elements of KBSM(ST) correspond bijectively to the elements of the following set:

\begin{equation}\label{Lpr}
\mathcal{B}^{\prime}_{{\rm ST}}=\{ t{t^{\prime}_1} \ldots {t^{\prime}_n}, \ n\in \mathbb{N} \}.
\end{equation}

\noindent The set $\mathcal{B}^{\prime}_{{\rm ST}}$ forms a basis of KBSM(ST) in terms of braids (see also \cite{Tu}). Note that $\mathcal{B}^{\prime}_{{\rm ST}}$ is a subset of $\mathcal{H}$ and, in particular, $\mathcal{B}^{\prime}_{{\rm ST}}$ is a subset of $\Sigma^{\prime}=\bigcup_n\Sigma^{\prime}_n$. Note also that in contrast to elements in $\Sigma^{\prime}$, the elements in $\mathcal{B}^{\prime}_{{\rm ST}}$ have no gaps in the indices, the exponents are all equal to one and there are no `braiding tails'. 

\begin{remark}\rm
The invariant $V^{{\rm B}}$ recovers KBSM(ST). Indeed, it gives distinct values to distinct elements of $\mathcal{B}^{\prime}_{{\rm ST}}$, since ${\rm tr}(t{t^{\prime}_1} \ldots {t^{\prime}_n})=s_{1}^n$.
\end{remark}

\section{The basis $\mathcal{B}_{{\rm ST}}$ of KBSM(ST)}\label{kbsm}

In this section we prove the main result of this paper, Theorem~\ref{newbasis}. Before proceeding with the proof we present the motivation that lead to the new basis $\mathcal{B}_{\rm ST}$ of KBSM(ST):

\smallbreak

The relation between ${\rm KBSM}\left(L(p, 1)\right)$ and ${\rm KBSM}({\rm ST})$ is presented in \cite{P} and it is shown that: 

$${\rm KBSM}\left(L(p, 1)\right)=\frac{{\rm KBSM}({\rm ST})}{<a-bbm(a)>}, \quad a\ {\rm in\ the\ basis\ of\ KBSM(ST)}.$$

In order to extend $V^{{\rm B}}$ to an invariant of links in $L(p, q)$ we need to solve an infinite system of equations resulting from the braid band moves. Namely we force:

\begin{equation}\label{eqbbm}
V^{{\rm B}}_{\widehat{\alpha}}\ =\  V^{{\rm B}}_{\widehat{bbm(\alpha)}},
\end{equation}

\noindent for all $\alpha$ in the basis of KBSM(ST).

\bigbreak

The above equations have particularly simple formulations with the use of the new basis, $\mathcal{B}_{{\rm ST}}$, for the Kauffman bracket skein module of ST. This is a very technical and difficult task and is the subject of a sequel paper.

\smallbreak

We now recall results from \cite{DL2} that we will use throughout the paper. 

\begin{figure}
\begin{center}
\includegraphics[width=4.8in]{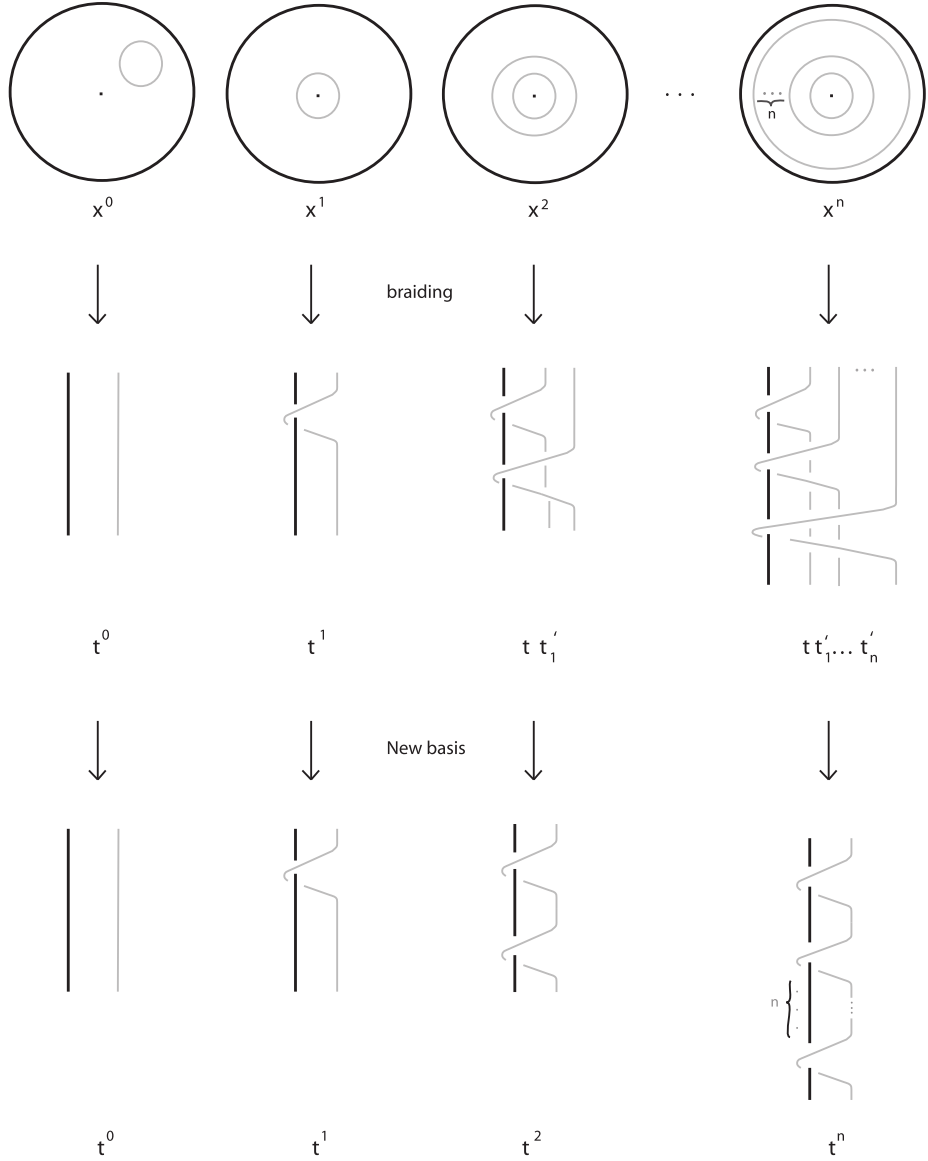}
\end{center}
\caption{Elements in the different bases of KBSM(ST).}
\label{allbases}
\end{figure}

\subsection{An ordering in the bases of $\mathcal{S}({\rm ST})$}

In \cite{DL2} an ordering relation is defined on the sets $\Sigma$ and $\Sigma^{\prime}$ which plays a crucial role in this paper. Before presenting this ordering relation, we first introduce the sets $\Lambda^{\prime}$ and $\Lambda$ and the notion of the {\it index} of a word $w$, denoted $ind(w)$, in any of these sets. 

\begin{defn} \rm
We define the following subsets of $\Sigma_n$ and $\Sigma^{\prime}$ respectively:
\begin{equation}
\begin{array}{l}
\Lambda_{(k)}\ :=\{t_0^{k_0}t_1^{k_1}\ldots t_{m}^{k_m} \ | \ k_i\ \geq\ k_{i+1},\ \sum_{i=0}^{m}{k_i}=k,\ k_i \in \mathbb{Z}\setminus\{0\},\ \forall i \},\\
\\
\Lambda^{\prime}_{(k)}:=\{{t^{\prime}_0}^{k_0}{t^{\prime}_1}^{k_1}\ldots {t^{\prime}_{m}}^{k_m}\ | \ k_i\ \geq\ k_{i+1}, \ \sum_{i=0}^{m}{k_i}=k,\ k_i \in \mathbb{Z}\setminus\{0\},\ \forall i \},\\
\\
\Lambda^{aug}_{(k)}\ :=\{t_0^{k_0}t_1^{k_1}\ldots t_{m}^{k_m} \ | \ \sum_{i=0}^{m}{k_i}=k,\ k_i \in \mathbb{Z}\setminus\{0\},\ \forall i \},\\
\\
{\Lambda^{\prime}_{(k)}}^{aug}:=\{{t^{\prime}_0}^{k_0}{t^{\prime}_1}^{k_1}\ldots {t^{\prime}_{m}}^{k_m}\ | \ \sum_{i=0}^{m}{k_i}=k,\ k_i \in \mathbb{Z}\setminus\{0\},\ \forall i \}.
\end{array}
\end{equation}
\end{defn}

Note that elements in the set $\Lambda_{(k)}$ have ordered exponents, while elements in $\Lambda^{aug}_{(k)}$ have arbitrary exponents. Obviously, $\Lambda_{(k)}\subset \Lambda^{aug}_{(k)}\subset \Sigma_n$.

\begin{remark}\label{lam}\rm
In \cite{DL2} the set $\Lambda\ :=\ \underset{k}{\bigcup}\ \Lambda_{(k)}$ is showed to be a basis for the HOMFLYPT skein module of ST.
\end{remark}

\begin{defn}{\cite[Definition~1]{DL2}} \rm 
Let $w$ a word in $\Lambda$. Then, the index of $w$, $ind(w)$, is defined to be the highest index of the $t_i$'s in $w$. Similarly, in $\Sigma^{\prime}$ or $\Sigma$, $ind(w)$ is defined as above by ignoring possible gaps in the indices of the looping generators and by ignoring the braiding parts in the algebras $\textrm{H}_{n}(q)$. Moreover, the index of a monomial in $\textrm{H}_{n}(q)$ is equal to $0$.
\end{defn}

We now proceed with presenting an ordering relation in the sets $\Sigma$ and $\Sigma^{\prime}$, which passes to their respective subsets $\mathcal{B}_{\rm ST}$ and $\mathcal{B}_{\rm ST}^{\prime}$.

\begin{defn}{\cite[Definition~2]{DL2}} \label{order}\rm
Let $w={t^{\prime}_{i_1}}^{k_1}\ldots {t^{\prime}_{i_{\mu}}}^{k_{\mu}}\cdot \beta_1$ and $u={t^{\prime}_{j_1}}^{\lambda_1}\ldots {t^{\prime}_{j_{\nu}}}^{\lambda_{\nu}}\cdot \beta_2$ in $\Sigma^{\prime}$, where $k_t , \lambda_s \in \mathbb{Z}$ for all $t,s$ and $\beta_1, \beta_2 \in H_n(q)$. Then, we define the following ordering in $\Sigma^{\prime}$:

\smallbreak

\begin{itemize}
\item[(a)] If $\sum_{i=0}^{\mu}k_i < \sum_{i=0}^{\nu}\lambda_i$, then $w<u$.

\vspace{.1in}

\item[(b)] If $\sum_{i=0}^{\mu}k_i = \sum_{i=0}^{\nu}\lambda_i$, then:

\vspace{.1in}

\noindent  (i) if $ind(w)<ind(u)$, then $w<u$,

\vspace{.1in}

\noindent  (ii) if $ind(w)=ind(u)$, then:

\vspace{.1in}

\noindent \ \ \ \ ($\alpha$) if $i_1=j_1, \ldots , i_{s-1}=j_{s-1}, i_{s}<j_{s}$, then $w>u$,

\vspace{.1in}

\noindent \ \ \  ($\beta$) if $i_t=j_t$ for all $t$ and $k_{\mu}=\lambda_{\mu}, k_{\mu-1}=\lambda_{\mu-1}, \ldots, k_{i+1}=\lambda_{i+1}, |k_i|<|\lambda_i|$, then $w<u$,

\vspace{.1in}

\noindent \ \ \  ($\gamma$) if $i_t=j_t$ for all $t$ and $k_{\mu}=\lambda_{\mu}, k_{\mu-1}=\lambda_{\mu-1}, \ldots, k_{i+1}=\lambda_{i+1}, |k_i|=|\lambda_i|$ and $k_i>\lambda_i$, then $w<u$,

\vspace{.1in}

\noindent \ \ \ \ ($\delta$) if $i_t=j_t\ \forall t$ and $k_i=\lambda_i$, $\forall i$, then $w=u$.

\end{itemize}

The ordering in the set $\Sigma$ is defined as in $\Sigma^{\prime}$, where $t_i^{\prime}$'s are replaced by $t_i$'s.
\end{defn}


\subsection{From $\mathcal{B}^{\prime}_{\rm ST}$ to $\Lambda$}

In this subsection we recall a series of results from \cite{DL2} in order to convert elements in $\mathcal{B}^{\prime}_{\rm ST}$ to elements in $\Lambda$. In order to simplify the algebraic expressions obtained throughout this procedure and throughout the paper in general, we first introduce the following notation:

\begin{nt}\label{nt} \rm
We set $\tau_{i,i+m}^{k_{i,i+m}}:=t_i^{k_i}\ldots t^{k_{i+m}}_{i+m}\in \Sigma$ and ${\tau^{\prime}}_{i,i+m}^{k_{i,i+m}}:={t^{\prime}}_i^{k_i}\ldots {t^{\prime}}^{k_{i+m}}_{i+m}\in \Sigma_n^{\prime}$, for $m\in \mathbb{N}$, $k_j\neq 0$ for all $j$.
\end{nt}

\begin{remark}\rm
Using Notation~\ref{nt}, elements in $\mathcal{B}_{\rm ST}^{\prime}$ are of the form $\tau^{\prime}_{0, n}\ :=\ tt_1^{\prime}\ldots t_n^{\prime}$, for $n\in \mathbb{N}$, that is $\mathcal{B}_{\rm ST}^{\prime}\ =\ \left\{ \tau^{\prime}_{0, n} \right\}_{n=0}^{\infty}$. Moreover, we set $\mathcal{K}_{\rm ST} \ =\ \left\{ \tau_{0, n} \right\}_{n=0}^{\infty}$, and so elements in $\mathcal{K}_{\rm ST}$ are of the form $\tau_{0, n}\ :=\ tt_1\ldots t_n$, for $n\in \mathbb{N}$.

Moreover,
\[
\begin{array}{lcll}
\Lambda^{\prime}_{(k)} & = & \left\{ {\tau^{\prime}}^{k_{0, n}}_{0, n}\ |\ k_i\geq k_{i-1},\ \underset{i=0}{\overset{n}{\sum}}k_i=k, \ k_i\in \mathbb{Z}\backslash \{0\} \right\}, & \Lambda^{\prime}\ =\ \underset{k\in \mathbb{Z}}{\oplus} \Lambda^{\prime}_{(k)}\\
&&&\\
\Lambda_{(k)} & = & \left\{ {\tau}^{k_{0, n}}_{0, n}\ |\ k_i\geq k_{i-1},\ \underset{i=0}{\overset{n}{\sum}}k_i=k, \ k_i\in \mathbb{Z}\backslash \{0\} \right\}, & \Lambda\ =\ \underset{k\in \mathbb{Z}}{\oplus} \Lambda_{(k)}
\end{array}
\]
\end{remark}

\smallbreak

We also introduce the notion of {\it homologous words}, which is crucial for relating the sets $\mathcal{B}_{\rm ST}^{\prime}$ and $\mathcal{K}_{\rm ST}$ via a triangular matrix.

\begin{defn}{\cite[Definition~4]{DL2}} \rm
		We say that two words $w^{\prime}\in \Sigma^{\prime}$ and $w\in \Sigma$ are {\it homologous}, denoted $w^{\prime}\sim w$, if $w$ is
		obtained from $w^{\prime}$ by changing $t^{\prime}_i$ into $t_i$ for all $i$ and ignoring the braiding parts.
	\end{defn}

We now recall a result from \cite{DL2} in order to convert monomials in the $t_i^{\prime}$'s in general to monomials in the $t_i$'s in $\Sigma_n$. More precisely:

\begin{thm}{\cite[Theorem~7]{DL2}}\label{convert}
The following relations hold:

\begin{equation*}\label{conv}
{\tau^{\prime}}^{k_{0, n}}_{0, n} \ = \ \tau^{k_{0, n}}_{0, n}\ + \ A\cdot \tau_{0, n}\cdot w \ + \ \sum_{j}{B_j\tau_j \cdot \beta_j},\\
\end{equation*}

\noindent where $w, \beta_j \in {\rm H}_{n+1}(q), \forall j$, $\tau_j \in \Sigma_n$, such that $\tau_j < \tau_{0, n}, \forall j$ and $A, B_j$ coefficients.
\end{thm}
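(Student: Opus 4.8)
The plan is to argue by induction on the top index $n$, peeling off the highest looping factor ${t^{\prime}_n}^{k_n}$ and reducing everything to the same statement at index $n-1$, while using the ordering of Definition~\ref{order} to certify triangularity. The base case $n=0$ is immediate, since $t^{\prime}_0=t_0=t$ forces ${t^{\prime}_0}^{k_0}=t_0^{k_0}=\tau^{k_0}_{0,0}$ with no correction terms. Throughout, the organizing invariant is homology: the abelianization sending $t\mapsto 1$ and each $\sigma_i\mapsto 0$ descends to ${\rm H}_{1,n}(q)$, because every defining relation—and in particular the quadratic relation $g_i^2=(q-1)g_i+q$—is homogeneous, and both $t_i$ and $t^{\prime}_i$ have homology $1$. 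Hence every monomial produced while rewriting ${\tau^{\prime}}^{k_{0,n}}_{0,n}$ has total $t$-exponent equal to $k=\sum_i k_i$; so clause (a) of Definition~\ref{order} never separates these terms from the leading one, and the comparison is governed by the index and by the finer clauses (b).

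First I would treat a single top generator. Writing $t^{\prime}_n=g_n t^{\prime}_{n-1}g_n^{-1}$, substituting $g_n^{-1}=q^{-1}g_n+(q^{-1}-1)$, and using the defining identity $t_n=g_n t_{n-1}g_n$, one gets
\[
t^{\prime}_n=q^{-1}t_n+(q^{-1}-1)\,g_n t_{n-1}+g_n\bigl(t^{\prime}_{n-1}-t_{n-1}\bigr)g_n^{-1}.
\]
Iterating on the power $k_n$ and feeding in the index-$(n-1)$ form of the conversion for the discrepancy $t^{\prime}_{n-1}-t_{n-1}$, this yields ${t^{\prime}_n}^{k_n}=c\,t_n^{k_n}+\bigl(\text{monomials of index}\le n-1\text{ carrying a braiding tail}\bigr)$, with $c$ an invertible power of $q$ (normalised to $1$ in the conventions of the statement). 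Every summand on the right again has homology $k_n$ and index strictly below $n$, so by clause (b)(i) it is strictly smaller than $t_n^{k_n}$.

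Next I would substitute the inductive hypothesis ${\tau^{\prime}}^{k_{0,n-1}}_{0,n-1}=\tau^{k_{0,n-1}}_{0,n-1}+(\text{lower order})$ for the prefix and multiply on the right by the expansion of ${t^{\prime}_n}^{k_n}$. The top-times-top product equals $\tau^{k_{0,n-1}}_{0,n-1}\,t_n^{k_n}=\tau^{k_{0,n}}_{0,n}$, the claimed leading monomial. In each remaining product at least one factor is a lower-order term or a braiding tail; to bring it into the standard form $\tau_j\cdot\beta_j\in\Sigma$ one pushes the intervening $g_j$'s to the right using $t_i g_j=g_j t_i$ for $j\neq i,i+1$, and, whenever a $g_j$ meets $t_j$ or $t_{j+1}$, applies the braid relation together with the quadratic relation. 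Each such collision either leaves the index unchanged or lowers it and never raises it, so all of these terms have index $\le n$ and homology $k$. The one term that retains full index $n$ but acquires a nontrivial braiding part is collected into the distinguished summand $A\cdot\tau_{0,n}\cdot w$ with $w\in{\rm H}_{n+1}(q)$ (for the all-exponents-one words of $\mathcal{B}^{\prime}_{\rm ST}$, where $k_i=1$ for all $i$, its looping part is exactly $\tau_{0,n}=t_0t_1\cdots t_n$).

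The heart of the argument—and the step I expect to be hardest—is the ordering bookkeeping. One must show that every term other than the leading $\tau^{k_{0,n}}_{0,n}$ and the distinguished $A\cdot\tau_{0,n}\cdot w$ is \emph{strictly} smaller than $\tau_{0,n}$ in Definition~\ref{order}. When a reduction drops the index to $\le n-1$, clause (b)(i) settles the matter at once; the genuine difficulty is the terms that remain at index $n$, where one must invoke the finer clauses $(\alpha)$–$(\gamma)$, comparing the surviving exponents $|k_i|$ and their signs, to check that a strict decrease occurs at each collision and that exactly one full-index term survives with a braiding tail. Verifying that this rewriting terminates and that the ordering is respected at every step—so that the whole right-hand side is triangular with respect to Definition~\ref{order}—is the delicate, computation-heavy core of the proof, and is precisely what the careful design of that ordering is meant to make possible.
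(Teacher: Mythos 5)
This paper never proves Theorem~\ref{convert}: it is quoted verbatim from \cite[Theorem~7]{DL2}, so there is no in-paper proof to compare against, and your attempt is in effect a reconstruction of the argument in \cite{DL2}. Your skeleton (induction peeling off the top looping generator via $t'_n=g_nt'_{n-1}g_n^{-1}$, the $t$-degree grading, ordering bookkeeping) does match the general strategy of the source, but the claims that make the induction close are either false or explicitly deferred. Concretely: (a) your claim that ${t'_n}^{k_n}$ expands as $c\,t_n^{k_n}$ plus monomials of index $\le n-1$ carrying braiding tails is wrong. Since $t_n=g_nt_{n-1}g_n$, the middle term of your own identity satisfies $(q^{-1}-1)g_nt_{n-1}=(q^{-1}-1)t_ng_n^{-1}$, a \emph{full-index} term with a tail; fully reduced, one gets for instance
\begin{equation*}
tt'_1 \;=\; (q^{-2}-q^{-1}+1)\,tt_1 \;+\; q^{-1}(q^{-1}-1)\,tt_1g_1,
\end{equation*}
so full-index tail terms occur already for $k_n=1$, and the coefficient of the bare homologous word is $q^{-2}-q^{-1}+1$, not an invertible power of $q$ (this is precisely why the statement carries the separate summand $A\cdot\tau_{0,n}\cdot w$). (b) Your claim that pushing a $g_j$ rightward through the looping part ``never raises the index'' is false: $g_jt_{j-1}=t_jg_j^{-1}$ raises the index by one. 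This is not a cosmetic issue, because it can produce terms that are \emph{higher} in the ordering of Definition~\ref{order}: expanding ${t'_1}^{2}=g_1t^2g_1^{-1}$ produces the summand $(q^{-1}-1)^2\,tt_1$, and by clause (b)(ii)($\alpha$) one has $tt_1>t_1^2$, i.e.\ a term strictly greater than the homologous word $t_1^2$.

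The example in (b) shows that the triangularity you are claiming is actually \emph{false} for general monomials in the $t'_i$'s; the theorem holds only because its hypothesis forbids gaps (all of $t'_0,\dots,t'_n$ occur in $\tau'_{0,n}$ with nonzero exponent), so that the gap-free prefix absorbs the redistributed terms (e.g.\ $t^{k_0+1}t_1<t^{k_0}t_1^2$ by clause ($\beta$), whereas $tt_1>t_1^2$). Nothing in your induction ever invokes gap-freeness, so as written it would ``prove'' a statement with a counterexample. Finally, the step you yourself flag as ``the delicate, computation-heavy core''---checking, collision by collision, that every term except the homologous word and the distinguished $\tau_{0,n}\cdot w$ lands strictly lower---is exactly the content of the theorem; in \cite{DL2} it is not a single verification but a chain of rewriting lemmas (the analogues of Theorems~\ref{gap} and~\ref{tail} recalled in this paper, plus lemmas handling the exponent and index bookkeeping). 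Deferring it, on top of the incorrect intermediate claims above, leaves the proposal as a plan rather than a proof.
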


Since now we are only interested in converting elements in the set $\mathcal{B}_{\rm ST}^{\prime}$ to sums of monomials in the $t_i$'s, we have the following corollary:

\begin{cor}\label{convert1}
The following relations hold:

\begin{equation}\label{conv1}
\tau_{0, n}^{\prime} \ = \ \tau_{0, n}\ + \ A\cdot \tau_{0, n}\cdot w \ + \ \sum_{j}{B_j\tau_j \cdot \beta_j},\\
\end{equation}

\noindent where $w, \beta_j \in {\rm H}_{n+1}(q), \forall j$, $\tau_j \in \Sigma_n$, such that $\tau_j < \tau_{0, n}, \forall j$ and $A, B_j$ coefficients.
\end{cor}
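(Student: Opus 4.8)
\textbf{Proof plan for Corollary~\ref{convert1}.}
The plan is to derive the Corollary directly from Theorem~\ref{convert} by specializing to the case where all exponents equal one. Elements of $\mathcal{B}_{\rm ST}^{\prime}$ are exactly the words $\tau^{\prime}_{0,n}=tt_1^{\prime}\ldots t_n^{\prime}$, which in the notation of Theorem~\ref{convert} correspond to ${\tau^{\prime}}^{k_{0,n}}_{0,n}$ with $k_0=k_1=\cdots=k_n=1$. First I would substitute these values into the general conversion formula of Theorem~\ref{convert}. The leading term $\tau^{k_{0,n}}_{0,n}$ then becomes $\tau_{0,n}=tt_1\ldots t_n$, which is the homologous monomial in the $t_i$'s, while the correction terms $A\cdot\tau_{0,n}\cdot w$ and $\sum_j B_j\tau_j\cdot\beta_j$ carry over verbatim, together with the constraints $w,\beta_j\in{\rm H}_{n+1}(q)$, $\tau_j\in\Sigma_n$, and $\tau_j<\tau_{0,n}$.

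The only genuine content is to confirm that the specialization is legitimate, i.e. that the all-exponents-one word is indeed of the form covered by Theorem~\ref{convert} and that nothing in the ordering or index bookkeeping degenerates. Since each $k_i=1$ is nonzero, the word $\tau^{\prime}_{0,n}$ lies in ${\Lambda^{\prime}_{(n+1)}}^{aug}$ and hence in $\Sigma^{\prime}$, so Theorem~\ref{convert} applies without modification. The exponents are already (weakly) decreasing, so $\tau^{\prime}_{0,n}$ in fact lies in $\Lambda^{\prime}$ itself, and the leading term of the conversion is precisely the homologous element $\tau_{0,n}\in\Lambda$. I would briefly note that the second summand is written separately from the $\sum_j$ because the term $A\cdot\tau_{0,n}\cdot w$ shares the same looping part $\tau_{0,n}$ as the leading term but is multiplied by a nontrivial braiding tail $w\in{\rm H}_{n+1}(q)$, so it has index $n$ but is not a basis element of $\Lambda$; the remaining terms have strictly smaller looping part $\tau_j<\tau_{0,n}$.

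There is essentially no obstacle here: the Corollary is a direct specialization of the Theorem, and the main point to verify is purely notational, namely that setting every exponent to one turns $\mathbf{\tau}^{\prime}^{k_{0,n}}_{0,n}$ into the basis element $\tau_{0,n}^{\prime}$ of $\mathcal{B}_{\rm ST}^{\prime}$ and $\tau^{k_{0,n}}_{0,n}$ into $\tau_{0,n}$. The ordering relation of Definition~\ref{order}, restricted to the subsets $\mathcal{B}_{\rm ST}^{\prime}$ and $\mathcal{K}_{\rm ST}$, continues to hold, so the inequalities $\tau_j<\tau_{0,n}$ are inherited directly from Theorem~\ref{convert}. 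Thus the proof reduces to invoking Theorem~\ref{convert} with $k_i=1$ for all $i$ and reading off the resulting identity as Equation~\eqref{conv1}.
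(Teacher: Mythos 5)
Your proposal is correct and matches the paper's treatment: the paper offers no separate argument for Corollary~\ref{convert1}, presenting it as an immediate specialization of Theorem~\ref{convert} to the case $k_0=\cdots=k_n=1$, which is exactly what you do. Your added check that each $k_i=1$ is nonzero and weakly decreasing, so that $\tau^{\prime}_{0,n}\in\Lambda^{\prime}_{(n+1)}\subset\Sigma^{\prime}$ and the theorem applies with leading term the homologous word $\tau_{0,n}$, is the only content needed and is handled correctly.
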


After expressing an element $\tau_{0, n}^{\prime} \in \mathcal{B}_{\rm ST}^{\prime}$ as sums of elements in $\Sigma_n$, we obtain the homologous word $\tau_{0, n}$, the homologous word again followed by a `braiding tail' $w \in {\rm TL}_{n}$ and elements in $\Sigma_n$ with possible `gaps' in the indices. In \cite{DL2}, using conjugation, monomials in the $t_i$'s with `gaps' in the indices are expressed as sums of monomials in $\Lambda$, followed by `braiding tails'. For the expressions that we obtain after appropriate conjugations we shall use the notation $ \widehat{=}$. We recall the following result from \cite{DL2}:

\begin{thm}{\cite[Theorem~8]{DL2}}\label{gap}
Let $T$ be a monomial in the $t_i$'s with `gaps' in the indices. The following relations hold:

\begin{equation}\label{gaps}
T \ \widehat{=} \ \sum_{i}{A_i\cdot T_i\cdot w_i},
\end{equation}

\noindent where $T_i \in \Lambda_{(n)}$, such that $T_i < T, \forall i$, $w_i \in {\rm TL}_{n+1}, \forall i$,  and $A_i$ coefficients.
\end{thm}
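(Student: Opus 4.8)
The plan is to prove the statement by induction on the ordering $<$ of Definition~\ref{order}, restricted to the set of monomials of a fixed exponent sum $n$; since $ind$ is bounded below by $0$ and each $|k_i|\ge 1$, this ordering is well-founded on that set, so such an induction is legitimate. The engine of the whole argument is the single identity coming from the definition of the looping generators, namely $t_i=g_i\,t_{i-1}\,g_i$, which gives, after conjugating by $g_i$ and applying the quadratic relation $g_i^2=(q-1)g_i+q$,
\[
g_i^{-1}\,t_i\,g_i\ =\ t_{i-1}\,g_i^{2}\ =\ q\,t_{i-1}+(q-1)\,t_{i-1}\,g_i .
\]
This replaces the looping generator $t_i$ of index $i$ by terms whose highest looping index is $i-1$ (the surviving $g_i$ is a braiding tail, hence ignored by $ind$), while the exponent sum is unchanged. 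This is exactly what is needed to descend in $<$ via clause (b)(i), and it keeps us at the level $\Lambda_{(n)}$ throughout.

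For the inductive step, suppose $T=t_{i_1}^{k_1}\ldots t_{i_r}^{k_r}$ has a gap, and let $t_{i_r}$ be its top looping generator, so $ind(T)=i_r$. I would first treat the case in which the generator directly below the top is absent, i.e. $t_{i_r-1}$ does not occur in $T$. Then $g_{i_r}$ commutes with the whole prefix $u:=t_{i_1}^{k_1}\ldots t_{i_{r-1}}^{k_{r-1}}$ (each $t_{i_l}$ with $i_l\le i_r-2$ involves only $g_1,\ldots,g_{i_r-2}$ and $t$, all of which commute with $g_{i_r}$), so conjugation of $T$ by $g_{i_r}$ localises to the top factor. Recording this conjugation with $\widehat{=}$, which is legitimate by move (i) of Theorem~\ref{markov}, and then expanding $g_{i_r}^{-1}t_{i_r}^{k_r}g_{i_r}$ with the identity above, I obtain $T\ \widehat{=}\ \sum_i A_i\,T_i\,w_i$ in which every $T_i$ has top index $i_r-1<i_r$ and every $w_i$ is a power of $g_{i_r}$, i.e. a braiding tail in ${\rm TL}_{n+1}$. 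Hence $T_i<T$ by Definition~\ref{order}(b)(i).

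Each spawned $T_i$ is thus strictly smaller than $T$ in $<$, so the inductive hypothesis applies: either $T_i$ has no remaining gap, in which case it lies in $\Lambda^{aug}_{(n)}$ and, after reordering its exponents to satisfy $k_i\ge k_{i+1}$, in $\Lambda_{(n)}$, or it still carries a gap of strictly smaller order and is rewritten recursively. Well-foundedness of $<$ guarantees that the process terminates after finitely many steps, and collecting the resulting terms yields the claimed expression $T\ \widehat{=}\ \sum_i A_i\,T_i\,w_i$ with $T_i\in\Lambda_{(n)}$, $T_i<T$, and $w_i\in{\rm TL}_{n+1}$.

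The remaining configurations, where $t_{i_r}$ sits at the top of a block $t_{i_s}^{k_s}\ldots t_{i_r}^{k_r}$ of consecutive indices of length greater than one (the gap lying below $t_{i_s}$), are handled in the same spirit but require shifting the entire top block down by one, after which the block's bottom eventually reaches $t_0$ and the word becomes gapless. The difficulty here, and the step I expect to be the main obstacle, is that $g_{i_r}$ no longer commutes with the adjacent generator $t_{i_r-1}$, so the localisation above fails and one must instead conjugate by the longer word $g_{i_r}g_{i_r-1}\ldots g_{i_s}$ and apply the quadratic relation repeatedly; similarly, for exponents $|k_r|\ge 2$ the naive substitution $t_{i_r}\mapsto t_{i_r-1}$ threatens to reintroduce $t_{i_r}$ through $g_{i_r}t_{i_r-1}=t_{i_r}g_{i_r}^{-1}$, and these cancellations must be organised carefully. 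The technical heart of the proof is therefore the bookkeeping showing that \emph{every} correction term produced by these repeated applications of the quadratic relation is strictly smaller than $T$ in the ordering of Definition~\ref{order}, and in particular that none of them secretly carries looping index $\ge i_r$, so that the induction genuinely closes. The final, comparatively routine, passage from $\Lambda^{aug}_{(n)}$ to $\Lambda_{(n)}$ that orders the exponents of the gapless terms I would invoke as a separate reduction.
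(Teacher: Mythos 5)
First, a point of reference: this paper never proves Theorem~\ref{gap} at all --- the statement is imported verbatim from \cite{DL2}, where it is established through a long chain of preparatory conjugation lemmas --- so your attempt has to stand on its own. It does not, and the failure is concrete: the one case you claim to complete is false as stated, and the part you set aside as ``bookkeeping'' is the actual content of the theorem. Your Case 1 assertion --- that after conjugating by $g_{i_r}$ every resulting monomial has top index $i_r-1$ and every tail is a power of $g_{i_r}$ --- holds only for $k_r=\pm 1$. For $k_r=2$, since $g_{i_r}$ and $t_{i_r-1}$ do \emph{not} commute (one has $g_{i_r}t_{i_r-1}=t_{i_r}g_{i_r}^{-1}$, and $t_{i_r-1}$ is created by your own substitution even when it is absent from $T$), the quadratic relation gives
\[
g_{i_r}^{-1}t_{i_r}^{2}g_{i_r}\;=\;\bigl(t_{i_r-1}g_{i_r}^{2}\bigr)^{2}
\;=\;q\,t_{i_r-1}^{2}g_{i_r}^{2}\;+\;(q-1)\,t_{i_r-1}t_{i_r}g_{i_r},
\]
and the second summand still has top index $i_r$. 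So conjugation by $g_{i_r}$ alone does not lower the index, and the descent you claim simply does not occur.

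Worse, that surviving term breaks your induction measure. In Definition~\ref{order}, two monomials with equal exponent sum and equal index are compared by clause (b)(ii)($\alpha$), under which the word whose first differing index is \emph{smaller} is the \emph{larger} word; hence $t_{i_r-1}t_{i_r}>t_{i_r}^{2}$. In a concrete instance,
\[
t_0t_2^{2}\;\widehat{=}\;q\,t_0t_1^{2}g_2^{2}\;+\;(q-1)\,t_0t_1t_2\,g_2,
\qquad\text{where}\quad t_0t_1t_2\;>\;t_0t_2^{2},
\]
so the very first step of your recursion produces a term strictly \emph{greater} than $T$ in the ordering $<$ you induct on. Well-foundedness of $<$ is therefore irrelevant: the recursion is not an induction on $<$ at all, and closing it requires a different descent parameter (one tracking the gaps and exponents themselves, which is what the lemma structure of \cite{DL2} implements). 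Two further points you wave through are also not free: passing from $\Lambda^{aug}_{(n)}$ to $\Lambda_{(n)}$ by ``reordering the exponents'' is itself a nontrivial conjugation result with lower-order correction terms, and the block case you defer (consecutive indices sitting above a gap, exponents $|k|\ge 2$) is precisely where the theorem lives. As it stands, your proposal proves the statement only for monomials whose top generator has exponent $\pm 1$ and a gap immediately below it.
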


As shown in \cite{DL2}, elements in the set $\Lambda$ followed by `braiding tails' can be expressed as sums of elements in $\Lambda^{aug}$ by using conjugation and stabilization moves. For the expressions that we obtain after appropriate conjugations and stabilization moves we shall use the notation $ \widehat{\simeq}$. Indeed, we have the following:

\begin{thm}{\cite[Theorem~10]{DL2}} \label{tail}
Let $\tau \in \Lambda$ and $w\in {\rm TL}_{n}$. Then, applying conjugation and stabilization moves we have that:

\begin{equation}\label{tails}
\tau\cdot w\ \widehat{\simeq}\  \sum_{j}{A_j\cdot \tau_{j}},
\end{equation}

\noindent where $\Lambda_{(n)}\ \ni \ \tau_j < \tau$, for all $j$.
\end{thm}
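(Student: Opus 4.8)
The plan is to prove Theorem~\ref{tail} by induction on the index $ind(\tau)$ of the leading looping element, using the structural results already available. First I would observe that the key mechanism is that a braiding tail $w \in {\rm TL}_n$ attached to a monomial $\tau \in \Lambda$ can be ``absorbed'' by a combination of conjugation and stabilization moves, which are permitted by Theorem~\ref{markov}. The base case is when $\tau$ has index $0$ or when the tail $w$ is trivial, where \eqref{tails} holds immediately. For the inductive step, I would write $w$ as a linear combination of the standard basis elements of ${\rm TL}_n$ (the images of the elements of $S_n$ from the set in \cite{Jo}) and treat each basis element separately, so that $\tau \cdot w$ becomes a sum of terms $\tau \cdot \sigma$ with $\sigma$ a single basis word in the braiding generators.

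The heart of the argument is then to move the highest-index looping generator $t_m^{k_m}$ of $\tau$ past the braiding tail and use stabilization to remove the top strand. Concretely, if $\sigma$ involves $g_{n-1}$ as its highest-index generator, I would use conjugation to bring a $g_{n-1}$ to the end of the word and then apply a stabilization move ($\alpha \sim \alpha \sigma_n^{\pm1}$ read in reverse) to lower the braid index; this is where the factors $A_j$ are generated. When $\sigma$ does not reach the top strand, the top looping generator can be conjugated and freed, again reducing the effective index. Each such reduction strictly decreases either the index or the length of the braiding tail, so the induction terminates, and at every stage the resulting monomials $\tau_j$ satisfy $\tau_j < \tau$ in the ordering of Definition~\ref{order}, because removing a strand and absorbing braiding generators can only decrease the index or shift the exponent profile downward in the sense of clauses (b)(i) and (b)(ii) of that ordering.

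I expect the main obstacle to be bookkeeping the ordering claim $\tau_j < \tau$ rigorously: one must check that each of the local moves (conjugation to reposition generators, the quadratic relation \eqref{quad} when two braiding generators collide, and the stabilization that deletes a strand) produces only monomials that are strictly smaller in the ordering, and that the leftover braiding tails $w_i$ always live in a Temperley--Lieb algebra of one lower rank so that the induction hypothesis applies. A secondary technical point is that conjugation may temporarily create gaps in the indices of the looping generators; these must be resolved by invoking Theorem~\ref{gap}, which reexpresses such gapped monomials as sums of elements of $\Lambda_{(n)}$ that are again strictly smaller than $\tau$. Assembling these pieces, together with the fact (from Remark~\ref{lam}) that $\Lambda$ spans the relevant module, yields the stated expansion $\tau \cdot w\ \widehat{\simeq}\ \sum_j A_j \cdot \tau_j$ with $\tau_j \in \Lambda_{(n)}$ and $\tau_j < \tau$ for all $j$.
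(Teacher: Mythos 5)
The first thing to note is that the paper never proves Theorem~\ref{tail}: it is imported verbatim from \cite[Theorem~10]{DL2}, so your attempt can only be measured against that source's argument, whose general strategy (expand the tail in a basis of the algebra, then strip it by conjugation and stabilization inside an induction) your plan does mirror. Measured that way, however, there are two genuine gaps. The first is the stabilization step itself. The move $\alpha\sim\alpha\sigma_n^{\pm1}$ is only available when everything except the deleted letter lives one strand down, i.e.\ $\alpha\in B_{1,n}$. In the typical situation $\tau=t^{k_0}t_1^{k_1}\cdots t_{n-1}^{k_{n-1}}$ with $w\in{\rm TL}_n$, conjugating one occurrence of $g_{n-1}$ to the end of the word still leaves $t_{n-1}^{k_{n-1}}$ (and possibly further occurrences of $g_{n-1}$) inside the word, so no stabilization can be applied. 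The only way forward is through the interaction relations between looping and braiding generators ($t_i=g_it_{i-1}g_i$, and $t_ig_j=g_jt_i$ only for $j\neq i,i+1$), combined with the quadratic/Temperley--Lieb relations; these are what actually produce the coefficients $A_j$ and the lower-order terms, they occupy the string of preparatory lemmas in \cite{DL2}, and your proposal never brings them in --- ``conjugation to reposition generators'' alone cannot do the job.

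The second gap is that your induction scheme is not set up so that it terminates or delivers the ordering claim. You induct on $ind(\tau)$ with tail length as a secondary parameter, but the moves you describe create terms of the \emph{same} index with \emph{longer} tails (each use of the quadratic relation doubles the number of terms, and pushing a $g$ through a power of a $t$ does not lower the index); moreover your own fallback --- invoking Theorem~\ref{gap} on gapped monomials --- returns elements of $\Lambda$ followed by \emph{new} braiding tails in ${\rm TL}_{n+1}$, i.e.\ exactly objects of the kind the theorem is about, on one more strand. The argument only closes as a strong induction with respect to the total order of Definition~\ref{order}, in which one checks case by case, through clauses (a), (b)(i) and (b)(ii)($\alpha$)--($\gamma$), that every term generated is strictly smaller than $\tau$; your assertion that the moves ``can only decrease the index or shift the exponent profile downward'' is precisely the statement that needs proof, and it is false as stated for the intermediate terms. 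Two smaller points: for $\tau$ of index $0$ with a nontrivial tail the claim is not ``immediate'' (it needs the same conjugation/stabilization machinery, using that $t$ fails to commute with $g_1$), and for $w=1$ the strict inequality $\tau_j<\tau$ cannot hold, so the trivial tail should be excluded rather than used as a base case.
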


Combining now Theorems~\ref{convert}, \ref{gap} \& \ref{tail} and Corollary~\ref{convert1} we have that an element $\tau^{\prime} \in \mathcal{B}_{\rm ST}^{\prime}$ can be expressed as a sum of the homologous word $\tau \in \mathcal{K}_{{\rm ST}}$ and lower order terms in $\Lambda_{(n)}$. More precisely, we have the following:

\begin{cor}\label{b'tol}
Let ${\tau^{\prime}}_{0, n} \in \mathcal{B}_{\rm ST}^{\prime}$. The following relations hold:
\begin{equation}
{\tau^{\prime}}_{0, n}\ \ \widehat{\simeq}\ \ {\tau}_{0, n}\ +\ \underset{i}{\sum} A_i\cdot \tau_i,
\end{equation}
\noindent where $\tau_i \in \Lambda$ such that $\tau_i < \tau_{0, n} \sim {\tau^{\prime}}_{0, n}$ for all $i$. 
\end{cor}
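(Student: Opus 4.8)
The statement of Corollary~\ref{b'tol} is essentially a bookkeeping combination of the three conversion theorems that precede it, so the plan is to chain them together while tracking the ordering at each stage. I would begin with the element $\tau^{\prime}_{0,n}\in \mathcal{B}^{\prime}_{\rm ST}$ and apply Corollary~\ref{convert1}, which expresses it as
\[
\tau^{\prime}_{0,n}\ =\ \tau_{0,n}\ +\ A\cdot \tau_{0,n}\cdot w\ +\ \sum_j B_j\,\tau_j\cdot \beta_j,
\]
where $w,\beta_j\in {\rm H}_{n+1}(q)$, each $\tau_j\in\Sigma_n$ satisfies $\tau_j<\tau_{0,n}$, and the leading term $\tau_{0,n}$ is exactly the homologous word sitting in $\mathcal{K}_{\rm ST}$. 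This is the first of the three pieces the final statement promises: the homologous word appears with coefficient $1$, and everything else is either the homologous word with a braiding tail or a strictly lower monomial (possibly with gaps in its indices) followed by a tail.

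Next I would dispose of the three types of remaining summands separately, always reducing into $\Lambda_{(n)}$. For the middle term $\tau_{0,n}\cdot w$ and for those $\tau_j$ that already lie in $\Lambda$, I would invoke Theorem~\ref{tail}: conjugation and stabilization moves turn $\tau\cdot w$ into $\widehat{\simeq}\ \sum A_j\,\tau_j$ with each $\tau_j\in\Lambda_{(n)}$ and $\tau_j<\tau$. For those $\tau_j$ that are monomials in the $t_i$'s with \emph{gaps} in the indices, I would first apply Theorem~\ref{gap}, which gives $\tau_j\ \widehat{=}\ \sum_i A_i\,T_i\,w_i$ with $T_i\in\Lambda_{(n)}$, $T_i<\tau_j$, and a fresh braiding tail $w_i\in{\rm TL}_{n+1}$, and then clear each new tail $w_i$ by a further application of Theorem~\ref{tail}. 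The point to check carefully is that the $\widehat{=}$ and $\widehat{\simeq}$ relations are compatible — that the conjugations/stabilizations introduced at one stage do not disturb the reductions already performed — so that the whole string of moves can be read as a single $\widehat{\simeq}$ applied to $\tau^{\prime}_{0,n}$.

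The decisive point, and the one requiring genuine care rather than routine substitution, is the \textbf{control of the ordering through the composition of all these moves}. Each individual theorem only asserts that the \emph{new} terms it produces are strictly below the term it was applied to; I must verify that this strictness is preserved under transitivity of the ordering of Definition~\ref{order}, so that every monomial ultimately produced is still strictly less than the top term $\tau_{0,n}$. Since the ordering is graded first by total exponent $\sum k_i$ and all the moves in Theorems~\ref{convert}, \ref{gap}, \ref{tail} preserve this total exponent (they are conjugations, stabilizations, and quadratic reductions that do not change the homology class), the comparison stays within a single graded piece $\Lambda_{(n)}$, and the strict inequalities $\tau_j<\tau_{0,n}$ chain correctly. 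Finally I would record that $\tau_{0,n}\sim\tau^{\prime}_{0,n}$ by definition of homologous words, collect all the lower terms into a single sum $\sum_i A_i\,\tau_i$ with $\tau_i\in\Lambda$ and $\tau_i<\tau_{0,n}$, and conclude the stated relation.
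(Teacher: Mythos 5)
Your proposal is correct and follows essentially the same route as the paper: the paper states this corollary without a separate proof, presenting it precisely as the combination of Corollary~\ref{convert1} (to produce the homologous word plus tailed/gapped lower-order terms) with Theorem~\ref{gap} (to remove gaps) and Theorem~\ref{tail} (to remove braiding tails), with the ordering controlled by transitivity within a fixed exponent-sum grade. Your additional care about the compatibility of the $\widehat{=}$ and $\widehat{\simeq}$ reductions and the preservation of the total $t$-exponent is a faithful elaboration of what the paper leaves implicit.
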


From Corollary~\ref{b'tol} we have that monomials ${\tau^{\prime}}_{0, n} \in \mathcal{B}_{\rm ST}$ can be expressed as sums of their corresponding homologous word $\tau_{0, n} \in \mathcal{K}_{\rm ST}$ with invertible coefficients, and elements $\tau_i \in \Lambda$ of lower order than $\tau_{0, n}$. The point now is that the elements $\tau_i$ do not necessarily belong to $\mathcal{K}_{\rm ST}$, but using conjugation and stabilization moves, we will show that these elements can be expressed as monomials in $\mathcal{B}_{\rm ST}$ of lower order than $\tau_{0, n}$, and thus, $\mathcal{B}_{\rm ST}$ spans KBSM(ST). We deal with these elements in the next subsection.

\subsection{From $\Lambda$ to $\mathcal{B}_{\rm ST}$}

As explained in the Introduction, our goal is to relate the sets $\mathcal{B}_{\rm ST}^{\prime}$ and $\mathcal{B}_{\rm ST}$ via an infinite block diagonal, invertible matrix. From Corollary~\ref{b'tol} we have that an element in $\mathcal{B}_{\rm ST}^{\prime}$ can be expressed as a sum of the homologous word in $\mathcal{K}_{\rm ST} \subset \Lambda$ and elements in $\Lambda$ of lower order. In this subsection we convert elements in $\Lambda$ to sums of elements in $\mathcal{B}_{\rm ST}$. We first deal with the homologous word $\tau_{0, n}\in \Lambda$ of ${\tau^{\prime}}_{0, n} \in \mathcal{B}^{\prime}_{{\rm ST}}$. We have the following:

\begin{prop}\label{ltob1}
Applying conjugation, stabilization moves and relations \ref{ideal}, the following relations hold:
\begin{equation}
\Lambda \ni \tau_{0, n}\ \widehat{\simeq}\ A\cdot t^{ind(\tau_{0, n})+1}\ +\ \underset{i=0}{\overset{ind(\tau_{0, n})}{\sum}} A_i\cdot t^i,
\end{equation}
\noindent where $A_i$ coefficients in the ground ring for all $i$.
\end{prop}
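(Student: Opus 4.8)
The plan is to argue by induction on the index $n=ind(\tau_{0,n})$, proving the slightly more general statement that every loop monomial $t_0^{a_0}t_1^{a_1}\cdots t_m^{a_m}$ in $\Lambda^{aug}$ with all $a_i\geq 1$ satisfies $t_0^{a_0}\cdots t_m^{a_m}\ \widehat{\simeq}\ A\,t^{k}+\sum_{i=0}^{k-1}A_i\,t^{i}$, where $k=\sum_i a_i$ is the total winding and $A$ is an invertible coefficient; the proposition is then the special case $a_0=\cdots=a_n=1$, $m=n$, $k=n+1$. The base case $m=0$ is immediate, since $t_0^{a_0}=t^{k}$ already.

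For the inductive step I would peel off the top loop. Writing $t_n=\sigma_n t_{n-1}\sigma_n$, one has $\tau_{0,n}=\tau_{0,n-1}\,\sigma_n t_{n-1}\sigma_n$, and I would resolve the two crossings $\sigma_n$ using the quadratic relation \ref{quad} together with the defining relations \ref{ideal} of ${\rm TL}_{n+1}^{\rm B}$. The ``through-strand'' contribution keeps a single loop and produces the monomial $\tau_{0,n-1}\,t_{n-1}=t\,t_1\cdots t_{n-2}\,t_{n-1}^{2}$, which has index $n-1$ but the same total winding $n+1$; the remaining contributions involve the Temperley--Lieb cup--cap $e_n$ on strands $n,n+1$, and since no other factor touches strand $n+1$, a stabilization move (closing that strand) converts each of them into a monomial of index at most $n-1$ and strictly smaller total winding, up to scalars. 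Equivalently, one may first conjugate to $\tau_{0,n}\sim\tau_{0,n-2}\,t_n t_{n-1}$ and apply the index-shifted form of the type-B relation $h_B$ from \ref{ideal} to the pair $t_n t_{n-1}$; either route lowers the index by one.

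Each monomial produced above lies in $\Lambda^{aug}$ and has index $\leq n-1$, so the generalized inductive hypothesis applies and rewrites it as a polynomial in $t$. Collecting terms, the leading power comes solely from the through-strand monomial $t\,t_1\cdots t_{n-1}^{2}$, whose total winding is $n+1$, yielding the term $A\,t^{n+1}$; every other monomial has strictly smaller winding and contributes only powers $t^{i}$ with $i\leq n$, whence $\tau_{0,n}\ \widehat{\simeq}\ A\,t^{n+1}+\sum_{i=0}^{n}A_i\,t^{i}$. Any residual braiding tails or gaps created along the way are absorbed using Theorems~\ref{gap} and \ref{tail} and the ordering of Definition~\ref{order}, which guarantee that every auxiliary term is of strictly lower order than $\tau_{0,n}$.

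The main obstacle is the top-loop reduction step. One must verify that the index-shifted type-B relation really holds in ${\rm TL}_{n+1}^{\rm B}$: the element $t_{n-1}$ is \emph{not} an honest conjugate of $t$, since $t_{n-1}=\sigma_{n-1}\cdots\sigma_1\,t\,\sigma_1\cdots\sigma_{n-1}$ rather than $\sigma_{n-1}\cdots\sigma_1\,t\,\sigma_1^{-1}\cdots\sigma_{n-1}^{-1}$, so the shifted relation is not a mere conjugate of $h_B$ but must be derived by combining $h_B$ with the type-A relations in \ref{ideal}. One must also check that the cup--cap/closure terms are genuinely of strictly smaller total winding. The second delicate point is the bookkeeping: ensuring the leading coefficient $A$ is invertible and that the total winding is preserved exactly by the through-strand term, so that the powers appearing range precisely over $0\leq i\leq n+1$ with the top coefficient nonzero.
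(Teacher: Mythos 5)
There is a genuine gap, and it sits exactly at the two points you defer as ``to be verified''; as written, your inductive step makes no progress at all. Writing $\sigma_n=a+b\,e_n$ is merely a change of basis in the two-dimensional span of $\{1,\sigma_n\}$, so expanding $t_n=\sigma_nt_{n-1}\sigma_n$ into a through-strand term and cup--cap terms is a tautology unless the $e_n$-terms can be evaluated \emph{independently} of this expansion. With the tools you allow yourself they cannot: the single terms $\tau_{0,n-1}e_nt_{n-1}$ and $\tau_{0,n-1}t_{n-1}e_n$ do reduce under conjugation and stabilization, but only to multiples of $\tau_{0,n-1}t_{n-1}$, which has the \emph{same} total winding $n+1$ (your claim of strictly smaller winding is false; Kauffman-type smoothings preserve winding only mod $2$), while the double term $\tau_{0,n-1}e_nt_{n-1}e_n$, once $e_n=\alpha+\beta\sigma_n$ is re-expanded, contains $\beta^{2}\,\tau_{0,n-1}\sigma_nt_{n-1}\sigma_n=\beta^{2}\,\tau_{0,n}$, i.e.\ the monomial you started from. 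Carrying out the bookkeeping with $a+b\alpha=0$ and $b\beta=1$, every other coefficient cancels and your identity collapses to $\tau_{0,n}\widehat{\simeq}\tau_{0,n}$. So the reduction works only if one injects a genuine relation of the quotient --- precisely the ``index-shifted'' type-B relation you flag as the main obstacle. Deriving it from $h_B$ and the $h_{1,2}$'s is not a verification detail; it \emph{is} the content of the proof, and it is absent. A separate casualty: since the $e_n$-terms retain winding $n+1$, the assertion that $t^{n+1}$ comes solely from the through-strand term, hence that $A$ is invertible, is unsupported even if the reduction were repaired.

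The paper avoids the shifted relation entirely by peeling from the \emph{bottom} rather than the top: $\tau_{0,n}=(tt_1)\,\tau_{2,n}$ and $tt_1=t\sigma_1t\sigma_1=\sigma_1t\sigma_1t$ is literally the longest word of the dihedral group occurring in $h_B$, so the defining relation $h_B=0$ of (\ref{ideal}) applies verbatim and rewrites $tt_1$ as a combination of the seven shorter words. After conjugation these become either monomials of smaller exponent sum, or the gapped monomial $t^{2}\tau_{2,n}$ followed by the braiding tail $\sigma_1$, all of strictly lower order in the sense of Definition~\ref{order}; Theorems~\ref{gap} and \ref{tail} from \cite{DL2} flatten the gaps and tails, and a strong induction on the order, with base case $tt_1\ \widehat{\simeq}\ (-u^{-1}z)\,t^{2}+\cdots$, closes the argument. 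If you wish to salvage a top-down proof, you must first prove an evaluation rule for $e_nt_{n-1}e_n$ in ${\rm TL}_{n+1}^{\rm B}$ (equivalently the shifted type-B relation), which in this algebraic setting amounts to redoing the bottom-up reduction anyway.
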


\begin{proof}
We prove Proposition~\ref{ltob1} by strong induction on the order of $\tau_{0, n}$.

\smallbreak

The base of the induction is the monomial $tt_1\in \Lambda$ of index $1$. We have that:
\[
\begin{array}{lcl}
tt_1 & = & t \sigma_1 t \sigma_1\ =\ \sigma_1 t \sigma_1 t\ =\\
&&\\
& = & -\frac{1}{(uv)^2} (1 + u \sigma_1 + v t + uv (\sigma t + t \sigma) + u^2v \sigma t \sigma + uv^2 t \sigma t)\ \widehat{\simeq}\\
&&\\
 & \widehat{\simeq} & -\frac{1}{(uv)^2} (1 + uz + v\ t + 2uvz t + u^2v\ t \sigma_1^2 + uv^2\ t^2 \sigma_1)\ \widehat{\simeq}\\
&&\\
 & \widehat{\simeq} & -\frac{1}{(uv)^2} (1 + uz + v\ t + 2uvz t + u^2v\ t  + u^2vz(u-u^{-1})\ t + uv^2z\ t^2)\ =\\
&&\\
& = & (-u^{-1}z)\ t^2\ +\ \underset{i=0}{\overset{1}{\sum}} A_i\cdot t^i.
\end{array}
\]
\noindent So, Proposition~\ref{ltob1} holds for $tt_1$.

\smallbreak

Assume now that Proposition~\ref{ltob1} holds for all monomials $\tau_i$ of lower order than $\tau_{0, n}$. Then, we have:
\[
\begin{array}{lcl}
\tau_{0, n} & := & tt_1\ (\tau_{2, n})\ =\ (t \sigma_1 t \sigma_1)\ (\tau_{2, n})\ =\ (\sigma_1 t \sigma_1 t)\ (\tau_{2, n})\ =\\
&&\\
& = &-\frac{1}{(uv)^2} \left[\ 1 + u \sigma_1 + v t + uv (\sigma_1 t + t \sigma_1) + u^2v \sigma_1 t \sigma_1 + uv^2 t \sigma t\ \right]\ (\tau_{2, n})\ \widehat{\simeq}\\
&&\\
 & \widehat{\simeq} & -\frac{1}{(uv)^2} \left[\ \tau_{2, n} + u\tau_{2, n}\sigma_1 + v t\tau_{2, n} + 2uvt\tau_{2, n}\sigma_1 + u^2v t\tau_{2, n} \sigma_1^2 + uv^2t^2 \tau_{2, n} \sigma_1 \ \right]\ \widehat{\simeq}\\
&&\\
 & \widehat{\simeq} & -\frac{1}{(uv)^2} t^2 \tau_{2, n}\sigma_1\ +\ \underset{i}{\sum} A_i\cdot \tau_i,\ {\rm where}\ \tau_i<\tau,\ \forall\ i.
\end{array}
\]

According to the ordering relation, on the right hand side of this equation we have the element $t^2 \tau_{2, n}\sigma_1$ and a sum of elements of lower order than $\tau_{0, n}$, since the sums of the exponents in the $t_i$'s in these elements are less than $n+1$. Moreover, the monomial $t^2 \tau_{2, n}\sigma_1$ contains a gap in the indices, and thus it is of lower order than $\tau_{0, n}$ (recall Definition~\ref{order}). Moroever, this monomial is followed by the `braiding tail' $\sigma_1$. According now to Theorems~\ref{gap} \& \ref{tail}, this element can be expressed as a sum of elements in $\Lambda_{(n)}$ of lower order than $t^2 \tau_{2, n}\sigma_1$ and hence, of lower order than $\tau_{0, n}$. By the induction hypothesis the proof is now concluded.
\end{proof}

We now deal with arbitrary elements in $\Lambda$ and convert them to sums of elements in $\mathcal{B}({\rm ST})$. We will need the following lemmas:

\begin{lemma}\label{lem1}
The following relations hold for all $n\in \mathbb{N}$:

$$
t^n t_1 \ \widehat{\simeq} \ -\frac{1}{u} z\ t^{n+1}\ +\ \underset{i=n-1}{\overset{n}{\sum}} A_i t^i, 
$$
\noindent where $A_i$ coefficients for all $i$.

\end{lemma}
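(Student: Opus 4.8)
The plan is to argue by strong induction on $n$, running in parallel with the induction already carried out in Proposition~\ref{ltob1}. The base case $n=1$ is exactly the computation performed there for $tt_1$, which gives $tt_1\ \widehat{\simeq}\ -u^{-1}z\,t^2+A_1t+A_0$; this already has the shape asserted by the lemma (with the summation index running over $i=0,1$). For the inductive step I would record that, writing $t_1=\sigma_1 t\sigma_1$, the word $t^nt_1=t^{n-1}\sigma_1 t\sigma_1 t$ is precisely $t^{n-1}$ times the top-degree monomial $\sigma_1 t\sigma_1 t$ appearing in the ideal generator $h_B$ of (\ref{ideal}).

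I would then multiply the relation $h_B=0$ on the left by $t^{n-1}$ and solve for this top monomial. This expresses $t^nt_1$ as $-\frac{1}{(uv)^2}$ times
\[
t^{n-1}\ +\ u\,t^{n-1}\sigma_1\ +\ v\,t^n\ +\ uv\,(t^{n-1}\sigma_1 t+t^n\sigma_1)\ +\ u^2v\,t^{n-1}\sigma_1 t\sigma_1\ +\ uv^2\,t^n\sigma_1 t.
\]
Each monomial carrying a single $\sigma_1$ is reduced by a cyclic conjugation followed by one destabilization move, which turns a trailing $\sigma_1$ into a factor $z$: thus $t^{n-1}\sigma_1\ \widehat{\simeq}\ z\,t^{n-1}$, $\ t^{n-1}\sigma_1 t\ \widehat{=}\ t^n\sigma_1\ \widehat{\simeq}\ z\,t^n$, $\ t^n\sigma_1\ \widehat{\simeq}\ z\,t^n$, and, crucially, $t^n\sigma_1 t\ \widehat{=}\ t^{n+1}\sigma_1\ \widehat{\simeq}\ z\,t^{n+1}$. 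This last reduction produces the leading term $-\frac{1}{(uv)^2}\,uv^2z\,t^{n+1}=-u^{-1}z\,t^{n+1}$, which is exactly the asserted top coefficient, and the first three reductions feed only into powers $t^{n-1}$ and $t^n$.

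The remaining and genuinely delicate term is the double-braiding monomial $u^2v\,t^{n-1}\sigma_1 t\sigma_1$. Since $\sigma_1 t\sigma_1=t_1$, this equals $u^2v\,t^{n-1}t_1$, i.e.\ the $(n-1)$-instance of the very quantity being computed, so I would dispose of it by the induction hypothesis. Collecting the single-braiding outputs with the leading $t^{n+1}$ and with the inductive reduction of $t^{n-1}t_1$ then yields an expression supported on powers of $t$ with leading coefficient $-u^{-1}z$. The main obstacle is precisely this double-braiding term: unlike the single-$\sigma_1$ terms, its two braiding generators are separated by a $t$ and so cannot be merged by conjugation alone, forcing one into the recursion; the real work is therefore the bookkeeping needed to confirm that, after invoking the hypothesis and using the quadratic relation (\ref{quad}) wherever two adjacent $\sigma_1$'s are created, the surviving powers collapse to exactly $t^{n-1}$ and $t^n$ below the leading $t^{n+1}$, with coefficients in the ground ring as claimed.
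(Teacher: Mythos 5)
Your inductive mechanism is genuinely different from the paper's, and the difference is exactly where your argument breaks. The paper's proof of Lemma~\ref{lem1} never re-expands $t^nt_1$ through $h_B$ in the inductive step: it writes $t^{n+1}t_1 = t\cdot(t^nt_1)$ and simply multiplies the induction hypothesis on the left by $t$, so the three-power window $\{t^{n+1},t^n,t^{n-1}\}$ is carried along unchanged. You instead re-run the $h_B$-expansion at every stage and feed the double-braiding term $u^2v\,t^{n-1}\sigma_1t\sigma_1 = u^2v\,t^{n-1}t_1$ back into the induction hypothesis. That is precisely where the ``bookkeeping'' you defer fails: by the hypothesis, $t^{n-1}t_1\ \widehat{\simeq}\ -\frac{1}{u}z\,t^{n}+A_{n-1}t^{n-1}+A_{n-2}t^{n-2}$, so this summand contributes
\[
-\frac{u^2v}{(uv)^2}\,A_{n-2}\,t^{n-2}\ =\ -\frac{A_{n-2}}{v}\,t^{n-2},
\]
and no other summand in your expansion can touch $t^{n-2}$ (the six single-braiding terms land only on $t^{n-1}$, $t^n$, $t^{n+1}$ after conjugation and destabilization). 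The coefficient $A_{n-2}$ is not zero: already for $n=2$ it is the base-case constant $A_0=-\frac{1+uz}{(uv)^2}\neq 0$, and carrying out your step explicitly gives
\[
t^2t_1\ \widehat{\simeq}\ -\frac{z}{u}\,t^3\ -\ \frac{1+uz}{u^2v}\,t^2\ +\ \frac{1+uz}{v^2}\,t\ +\ \frac{1+uz}{u^2v^3}\,t^0,
\]
a four-term expression. So the inductive step delivers support $\{t^{n+1},t^n,t^{n-1},t^{n-2}\}$ while the statement being proved allows only three powers: the induction does not close, and iterating your scheme the support in fact spreads all the way down to $t^0$.

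To be clear about what you have and have not shown: every individual move you invoke (the braid relation $t\sigma_1t\sigma_1=\sigma_1t\sigma_1t$, left multiplication of the relation $h_B=0$ by a power of $t$, conjugation by $t$, removal of a trailing $\sigma_1$ at the cost of a factor $z$, and the inductive substitution) is legitimate, so your argument does establish the weaker statement $t^nt_1\ \widehat{\simeq}\ -\frac{1}{u}z\,t^{n+1}+\sum_{i=0}^{n}A_it^i$. That weaker, ``leading term plus arbitrary lower-order terms'' form is actually all that is used later in the paper (it is what feeds into Lemma~\ref{lem4} and Theorem~\ref{main1}), but it is not Lemma~\ref{lem1} as stated. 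To get the three-term form you would have to either prove a cancellation that your own $n=2$ computation shows does not happen, or switch to the paper's mechanism of multiplying the already-established relation for $t^nt_1$ by $t$ on the left --- an operation whose compatibility with $\widehat{\simeq}$ (conjugation by elements not commuting with $t$ does not survive left multiplication by $t$) is itself a point requiring justification, but it is the paper's route, not yours.
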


\begin{proof}
We prove Lemma~\ref{lem1} by induction on $n$. For $n=1$ we have: $tt_1=-\frac{1}{u} z\ t^{n+1}\ +\ \underset{i=0}{\overset{1}{\sum}} A_i t^i$ (relations \ref{ideal}). Assume now that the relation is true for $n$. Then for $n+1$ we have:

\[
t^{n+1}t_1 \ = \ t\cdot (t^n t_1) \ \underset{hyp.}{\overset{ind.}{\widehat{\simeq}}} \ -\frac{1}{u} z\ t^{n+2}\ +\ \underset{i=n-1}{\overset{n}{\sum}} A_i t^{i+1}\ = \ -\frac{1}{u} z\ t^{n+2}\ +\ \underset{i=n}{\overset{n+1}{\sum}} A_i t^i.
\]

\end{proof}

The following lemma will serve as a basis for the induction hypothesis applied in the proof of the main result of this section.

\begin{lemma}\label{lem4}
The following relations hold for $n, m\in \mathbb{N}$:
$$
t^n t_1^m \ \ \widehat{\simeq} \ \ A\cdot t^{n+m}\ +\ \underset{i=0}{\overset{n+m-1}{\sum}} A_i\ t^{i},
$$
\noindent where $A, A_i$ coefficients for all $i$.
\end{lemma}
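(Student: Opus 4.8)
The plan is to prove Lemma~\ref{lem4} by strong induction on the ordering of Definition~\ref{order}, restricted to the monomials $t^a t_1^b$ with $a,b\ge 0$. On this set the ordering is well-founded: below any given monomial there lie only finitely many others (those of smaller or equal total degree $a+b$), since by clause (a) the total degree is the primary comparison and for each fixed degree there are finitely many monomials. Lemma~\ref{lem1} already disposes of every monomial with $t_1$-exponent $m=1$ (for all $n$), and the pure powers $t^a$ are of the asserted form with nothing to prove; together these furnish the base of the induction.

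For the inductive step, fix $t^n t_1^m$ with $m\ge 2$. When $n\ge 1$ I would use the braid relation $t_1 t = t\,t_1$ to write
\[
t^n t_1^m \;=\; t^{n-1}\,(t_1 t)\, t_1^{m-1},
\]
and then replace the junction $t_1 t=\sigma_1 t\sigma_1 t$ by the expression coming from the ideal generator $h_B$ of relations~\ref{ideal}, exactly as in the base case of Proposition~\ref{ltob1}. Distributing $t^{n-1}(\,\cdot\,)t_1^{m-1}$ over the resulting linear combination produces words in $t$, $t_1$ and $\sigma_1$, each of total $t$-degree at most $n+m$. After moving every braiding generator into a trailing braiding tail by conjugation ($\widehat{=}$) and applying Theorems~\ref{gap} and \ref{tail}, each such word becomes a combination of monomials $t^a t_1^b$. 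Inspection of $h_B$ shows that the only summand whose contribution retains the full degree $n+m$ is the one coming from $t\sigma_1 t$; upon absorbing its tail this reduces to monomials lying strictly below $t^{n+1}t_1^{m-1}$, and hence (by clause (b)(ii)($\beta$) of Definition~\ref{order}, since the $t_1$-exponent has dropped from $m$ to $m-1$) strictly below $t^n t_1^m$. All other summands have total degree at most $n+m-1$ and so are automatically of lower order. For $n=0$ the same mechanism applies after first expanding one factor $t_1=\sigma_1 t\sigma_1$ and using the quadratic relation~\ref{quad} to expose a reducible junction.

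Thus every monomial produced is either a pure power of $t$, already of the desired shape, or a monomial $t^a t_1^b < t^n t_1^m$; applying the induction hypothesis to the latter yields polynomials in $t$ of degree at most $a+b\le n+m$, and collecting all contributions gives $t^n t_1^m\ \widehat{\simeq}\ A\,t^{n+m}+\sum_{i=0}^{n+m-1}A_i t^i$. The main obstacle is precisely the order bookkeeping in the inductive step: one must verify that none of the words produced by $h_B$, after the tail-absorptions of Theorems~\ref{gap} and \ref{tail}, yields a monomial of order greater than or equal to $t^n t_1^m$. The delicate point is the degree-preserving term $t\sigma_1 t$, where one must rely on the fact that lowering the $t_1$-exponent strictly decreases the word in the ordering; the degenerate case $n=0$, in which there is no bare $t$ adjacent to the $t_1$-block, also requires the separate (though routine) preliminary expansion indicated above.
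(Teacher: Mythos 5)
Your proposal is correct and takes essentially the same route as the paper: strong induction on the ordering of Definition~\ref{order}, peeling off the junction via $t^n t_1^m = t^{n-1}(t t_1)t_1^{m-1}$, expanding $\sigma_1 t \sigma_1 t$ through the ideal relation $h_B$ of relations~\ref{ideal}, observing that the only degree-preserving term is the one coming from $t\sigma_1 t$ (which yields $t^{n+1}t_1^{m-1}\sigma_1$, of lower order by clause ($\beta$)), and disposing of the braiding tails via Theorem~\ref{tail} before invoking the inductive hypothesis. Your extra care with the base case (all of Lemma~\ref{lem1} plus pure powers) and the degenerate $n=0$ case are minor refinements of the paper's argument, not a different method.
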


\begin{proof}
We prove Lemma~\ref{lem4} by strong induction on the order of $t^n t_1^m \in \Lambda^{aug}$. The base of the induction is Lemma~\ref{lem1} for $n=1$. Assume that the relations are true for all elements in $\Lambda^{aug}$ of lower order than $t^n t_1^m$. Then, for $t^n t_1^m$ we have:

\[
t^n t_1^m  =  t^{n-1} (\underline{t t_1}) t_1^{m-1}\  = \ -\frac{1}{u^2 v^2} t^{n-1} (1+u\sigma_1 +vt+uv\left(\sigma_1 t + t \sigma_1) + u^2v \sigma_1 t \sigma_1 + uv^2 t \sigma_1 t \right) t_1^{m-1} \widehat{=}
\]
\[
\widehat{=}\ -\frac{1}{u^2 v^2} t^{n-1}t_1^{m-1}-\frac{1}{uv^2}t^{n-1}t_1^{m-1}\sigma_1 - \frac{1}{u^2v}t^{n}t_1^{m-1}-\frac{2}{uv}t^{n}t_1^{m-1}\sigma_1 -\frac{1}{v}t^{n-1}t_1^{m}-\frac{1}{u}t^{n+1}t_1^{m-1}\sigma_1.
\]

\smallbreak

The sum of the exponents in the elements $t^{n-1}t_1^{m-1}, t^{n}t_1^{m-1}$ and $t^{n-1}t_1^{m}$ on the right hand side of the relation are less than $n+m$, and thus, these elements are of lower order than $t^n t_1^m$ (recall Definition~\ref{order}). Applying now Theorem~\ref{tail} on the elements $t^{n-1}t_1^{m-1}\sigma_1, t^{n}t_1^{m-1}\sigma_1$ and $t^{n+1}t_1^{m-1}\sigma_1$, we convert them to sums of elements in $\Lambda^{aug}$ of lower order than $t^{n}t_1^{m}$. The proof is concluded by the induction hypothesis.
\end{proof}

\begin{thm}\label{main1}
Let $\tau\in \Lambda^{aug}_{(k)}$. The following relations hold:
\[
\tau\ \ \widehat{\simeq}\ \ \underset{i=0}{\overset{k}{\sum}} A_i\ t^i,
\]
\noindent where $A_i$ coefficients.
\end{thm}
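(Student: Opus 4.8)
The plan is to prove Theorem~\ref{main1} by strong induction on the order of $\tau\in\Lambda^{aug}_{(k)}$, using Lemma~\ref{lem4} as the anchoring case of the induction and Theorem~\ref{tail} as the main reduction tool. A general element of $\Lambda^{aug}_{(k)}$ has the form $\tau = t_0^{k_0}t_1^{k_1}\ldots t_m^{k_m}$ with $\sum_i k_i = k$ and arbitrary (nonzero) exponents. The key observation is that whenever $m\geq 1$, one can isolate the leftmost pair of looping generators carrying consecutive indices and rewrite the relevant factor $tt_1$ via relation~\ref{ideal} (the defining relation $h_B$ of $\mathrm{TL}_n^{\mathrm{B}}$), exactly as in the base step of Proposition~\ref{ltob1} and in Lemma~\ref{lem4}. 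This is the mechanism that trades an occurrence of $t_1$ against a power of $t$ together with error terms of strictly lower order.

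First I would set up the induction. For the base case $m=0$ the element is a pure power $t^k$ (up to sign of exponents), which already has the asserted form, and for $m=1$ the statement is precisely Lemma~\ref{lem4}. Assuming the result for all elements of $\Lambda^{aug}_{(k)}$ of lower order than $\tau$, I would write
\[
\tau = t^{k_0} t_1^{k_1}\,(\tau_{2,m}),
\]
where $\tau_{2,m} = t_2^{k_2}\ldots t_m^{k_m}$ collects the higher-index factors. The idea is to apply the $tt_1$-substitution to the prefix $t^{k_0}t_1^{k_1}$ much as in Lemma~\ref{lem4}: substituting $h_B$ for a factor $tt_1$ produces a leading term proportional to $t^{k_0+1}t_1^{k_1-1}$ (times the tail $\tau_{2,m}$ and a braiding generator $\sigma_1$), plus summands each of which has strictly smaller sum of exponents in the $t_i$'s, hence strictly lower order by Definition~\ref{order}(a), or the same exponent sum but a structurally smaller configuration. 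Each of these error terms is followed by a braiding tail in $\mathrm{TL}$, so by Theorem~\ref{tail} it can be rewritten $\widehat{\simeq}$ as a sum of elements of $\Lambda^{aug}$ of strictly lower order than $\tau$; these then fall under the induction hypothesis and produce only powers $t^i$ with $0\le i\le k$.

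The remaining task is to handle the surviving leading term, which still contains the higher-index tail $\tau_{2,m}$ and so has not yet been reduced to a pure power of $t$. Here I would iterate: repeatedly applying the same $tt_j\mapsto$ (power of $t$) reduction to successive index-pairs lowers the maximal index $m = ind(\tau)$, so a secondary induction on $ind(\tau)$ (or equivalently on the number of distinct looping generators $m$) closes the argument, each step again generating only strictly-lower-order corrections absorbed by the primary induction hypothesis. The invariance of the total exponent sum $k$ under every substitution guarantees that all powers of $t$ produced stay in the range $0\le i\le k$, matching the claimed bound. I expect the main obstacle to be bookkeeping in the inductive step: one must verify carefully that \emph{every} summand arising from substituting $h_B$ is genuinely of lower order in the sense of Definition~\ref{order}, paying attention to the subtle cases where the exponent sum is unchanged but the index pattern, the absolute values $|k_i|$, or the signs of the exponents decide the comparison. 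Establishing that the substitution never produces a term of equal-or-higher order than $\tau$ is the crux; once this is pinned down, the combination of Theorem~\ref{tail} and the strong induction hypothesis makes the reduction to $\sum_{i=0}^{k} A_i\, t^i$ routine.
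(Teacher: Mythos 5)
Your outline follows the paper's proof for most of the way---strong induction on the order in $\Lambda^{aug}$, base case Lemma~\ref{lem4}, substitution of relation~(\ref{ideal}) for a factor $tt_1$, and absorption of the smaller-exponent-sum summands via Theorem~\ref{tail} and the induction hypothesis---but it diverges, and breaks down, at precisely the point you yourself flag as ``the crux''. The one summand that does \emph{not} have strictly smaller exponent sum is the leading term $-\frac{1}{u}\, t^{k_0+1}t_1^{k_1-1}\tau_{2,n}^{k_{2,n}}\sigma_1$, and the paper disposes of it with a single observation that your proposal never makes: by case~($\beta$) of Definition~\ref{order}, two monomials with equal exponent sum, equal index and identical index pattern are compared through the absolute values of their exponents \emph{from the highest index downward}, so $t^{k_0+1}t_1^{k_1-1}\tau_{2,n}^{k_{2,n}} < t^{k_0}t_1^{k_1}\tau_{2,n}^{k_{2,n}} = \tau$, because $k_n,\ldots,k_2$ agree and $|k_1-1|<|k_1|$. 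Hence this term, like the other two carrying a braiding tail, is converted by Theorem~\ref{tail} into a sum of elements strictly below it, hence strictly below $\tau$, and the \emph{same} strong induction hypothesis closes the argument; no further device is needed. (Minor point: your claim that ``each of these error terms is followed by a braiding tail'' is also off---the summands coming from $1$, $vt$ and $u^2v\sigma_1 t\sigma_1$ have no tail, but they have strictly smaller exponent sum, so the induction applies to them directly.)

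Your substitute for the missing observation---iterating the substitution and running a secondary induction on $ind(\tau)$---does not work as stated. A single $h_B$-substitution does not lower the index: the leading term still involves all of $t_1,\ldots,t_n$; what drops is $|k_1|$, and only when $k_1>0$. To exhaust the exponent $k_1$ you would have to iterate $k_1$ times while managing the accumulating $\sigma_1$-tails, and at the final step the factor $t_1$ disappears altogether, producing a monomial with a \emph{gap} in the indices; reducing gapped monomials back into $\Lambda$ is the content of Theorem~\ref{gap}, which your proposal never invokes. So the secondary induction neither advances in one step nor has its inductive step justified---and the justification it would require is exactly the ordering inequality above, which simultaneously makes the whole iteration unnecessary. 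In short: the skeleton matches the paper, but the proposal leaves unproved (and concedes leaving unproved) the single inequality on which the paper's proof of Theorem~\ref{main1} actually turns.
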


\begin{proof}
Consider a monomial $\tau = t^{k_0}t_1^{k_1}\ldots t_n^{k_n} \in \Lambda^{aug}$. We prove the relations by strong induction on the order of $\tau$. The basis of the induction is Lemma~\ref{lem4}, since it deals with the monomials of type $t^nt_1^m$, which are of minimal order among all non-trivial monomials in $\Lambda^{aug}$. We assume that the statement of Theorem~\ref{main1} is true for all elements in $\Lambda^{aug}$ of lower order than $\tau$ and we will show that it is true for $\tau$. We have that:

\[
\begin{array}{lcl}
\tau & = & t^{k_0}t_1^{k_1}\ldots t_n^{k_n}\ =\ t^{k_0-1}\ (\underline{tt_1})\ t_1^{k_1-1}\ldots t_n^{k_n}\ =\\ 
&&\\
& = & t^{k_0-1}\left[-\frac{1}{(uv)^2}(1+u\sigma_1+vt+uv(\sigma_1t+t\sigma_1)+u^2v\sigma_1t\sigma_1+uv^2t\sigma_1t)\right]t_1^{k_1-1}\ldots t_n^{k_n}\ \widehat{=}\\
&&\\
& \widehat{=} & -\frac{1}{(uv)^2}\ t^{k_0-1}t_1^{k_1-1}\ldots t_n^{k_n}-\frac{1}{uv^2}\ t^{k_0-1}t_1^{k_1-1}\ldots t_n^{k_n}\sigma_1-\frac{1}{u^2v}\ t^{k_0}t_1^{k_1-1}\ldots t_n^{k_n}\ -\\
&&\\
& - & \frac{1}{uv}\ t^{k_0}t_1^{k_1-1}\ldots t_n^{k_n}\sigma_1-\frac{1}{v}\ t^{k_0-1}t_1^{k_1}\ldots t_n^{k_n}-
\frac{1}{u}\ t^{k_0+1}t_1^{k_1-1}\ldots t_n^{k_n}\sigma_1\ =\\
&&\\
& = & -\frac{1}{(uv)^2}\ t^{k_0-1}t_1^{k_1-1}\tau_{2, n}^{k_2, n}-\frac{1}{uv^2}\ t^{k_0-1}t_1^{k_1-1}\tau_{2, n}^{k_2, n}\sigma_1-\frac{1}{u^2v}\ t^{k_0}t_1^{k_1-1}\tau_{2, n}^{k_2, n}\ -\\
&&\\
& - & \frac{1}{uv}\ t^{k_0}t_1^{k_1-1}\tau_{2, n}^{k_2, n}\sigma_1-\frac{1}{v}\ t^{k_0-1}t_1^{k_1}\tau_{2, n}^{k_2, n}- \frac{1}{u}\ t^{k_0+1}t_1^{k_1-1}\tau_{2, n}^{k_2, n}\sigma_1
\end{array}
\]

\smallbreak

On the right-hand side of this relation we have the following monomials in $\Lambda^{aug}$:

$$t^{k_0-1}t_1^{k_1-1}\tau_{2, n}^{k_2, n}\ <\ t^{k_0}t_1^{k_1-1}\tau_{2, n}^{k_2, n}\ <\ t^{k_0-1}t_1^{k_1}\tau_{2, n}^{k_2, n}\ <\ \tau_{0, n}^{k_0, n}\ =\ \tau,$$

\noindent and the monomials $t^{k_0-1}t_1^{k_1-1}\tau_{2, n}^{k_2, n} \sigma_1$, $t^{k_0}t_1^{k_1-1}\tau_{2, n}^{k_2, n}\sigma_1$ and $t^{k_0+1}t_1^{k_1-1}\tau_{2, n}^{k_2, n} \sigma_1$ in the ${\rm H}_n(q)$-module $\Lambda^{aug}$. Applying Theorem~\ref{tail} on these monomials we have that:

\[
\begin{array}{lclll}
t^{k_0-1}t_1^{k_1-1}\tau_{2, n}^{k_2, n} \sigma_1 & \widehat{\simeq} & \underset{i}{\sum} A_i\ \tau_i, & {\rm such\ that}\ \ \tau_i\ <\ t^{k_0-1}t_1^{k_1-1}\tau_{2, n}^{k_2, n}\ <\  \tau_{0, n}^{k_0, n}, & {\rm for\ all}\ i    \\
&&&\\
t^{k_0}t_1^{k_1-1}\tau_{2, n}^{k_2, n} \sigma_1 & \widehat{\simeq} & \underset{j}{\sum} B_j\ \tau_j, & {\rm such\ that}\ \ \tau_j\ <\ t^{k_0}t_1^{k_1-1}\tau_{2, n}^{k_2, n}\ <\  \tau_{0, n}^{k_0, n}, & {\rm for\ all}\ j    \\
&&&\\
t^{k_0+1}t_1^{k_1-1}\tau_{2, n}^{k_2, n} \sigma_1 & \widehat{\simeq} & \underset{i}{\sum} C_i\ \tau_m, & {\rm such\ that}\ \ \tau_m\ <\ t^{k_0+1}t_1^{k_1-1}\tau_{2, n}^{k_2, n}\ <\  \tau_{0, n}^{k_0, n}, & {\rm for\ all}\ m 
\end{array}
\]

\noindent and thus, from the induction hypothesis the relation hold. 

\end{proof}

\subsection{Proof of Theorem~\ref{newbasis}}\label{pr}

Let $\tau^{\prime}_{0, n}\in \mathcal{B}^{\prime}({\rm ST}) \subset \Lambda_{(k)} \subset \Lambda^{aug}_{(k)}$. Then:

\[
\begin{array}{lcl}
\tau^{\prime}_{0, n} & \underset{Cor.~\ref{b'tol}}{\widehat{\simeq}} & \tau_{0, n}\ +\ \underset{i=0}{\sum} A_i\cdot \tau_i\ \underset{Prop.~\ref{ltob1}}{\widehat{\simeq}}\ A\cdot t^{index(\tau+1)}\ +\ \underset{i=0}{\overset{ind(\tau)}{\sum}} A_i\cdot t^i\ + \underset{i=0}{\sum} A_i\cdot \tau_i   \\
&&\\
&  \underset{Thm.~\ref{main1}}{\widehat{\simeq}}  & A\cdot t^{index(\tau+1)}\ +\ \underset{i=0}{\overset{ind(\tau)}{\sum}} A_i\cdot t^i\ + \ \underset{i=0}{\overset{k}{\sum}} B_i\cdot t^i \ = \ \underset{i}{\sum} C_i\cdot t^i\ \Rightarrow \\
&&\\
\tau^{\prime}_{0, n} & {\widehat{\simeq}} &  \underset{i=0}{\overset{n+1}{\sum}} C_i\cdot t^i,
\end{array}
\]

\smallbreak

\noindent where $A_i, B_i, C_i$ coefficients. Thus, elements in $\mathcal{B}^{\prime}({\rm ST})$ can be expressed as sums of elements in $\mathcal{B}({\rm ST})$, that is:

\bigbreak

{\it
The set $\mathcal{B}({\rm ST})$ spans the Kauffman bracket skein module of the solid torus.
}

\bigbreak

\noindent We now prove linear independence of the set $\mathcal{B}({\rm ST})$:

\bigbreak

The $t^n$'s geometrically consist of closed loops in the fundamental group of ST. Since $\pi_1({\rm ST}) = \mathbb{Z}$, $t^n \neq t^m$ for $n\neq m$ on the level of $\pi_1({\rm ST})$. This fact factors through to the Kauffman bracket skein module of ST, since the $t^n$'s can not be simplified neither by applying braid relations, nor by conjugation and stabilization moves. Moreover, the Tempereley-Lieb type crossing switches cannot be applied on the $t^n$'s, since they contain no classical crossings in our setting. Thus, the value of the KBSM(ST) on these elements remains the same as the value of the invariant $V^{{\rm B}}$ on these elements. 

\bigbreak

The proof of Theorem~\ref{newbasis} is now concluded.


\begin{thebibliography}{ABCD}


\bibitem[D]{D} {\sc I. Diamantis}, The Kauffman bracket skein module of the lens spaces $L(p,q)$ via braids, in preparation.

\bibitem[DL1]{DL1} {\sc I. Diamantis, S. Lambropoulou}, Braid equivalences in 3-manifolds 
with rational surgery description, {\em Topology and its Applications}, {\bf 194} (2015), 269-295.

\bibitem[DL2]{DL2} {\sc I. Diamantis, S. Lambropoulou}, A new basis for the HOMFLYPT skein module of the solid torus, {\em J. Pure Appl. Algebra} {\bf 220} Vol. 2 (2016), 577-605.

\bibitem[DL3]{DL3} {\sc I. Diamantis, S. Lambropoulou}, The braid approach to the HOMFLYPT skein module of the lens spaces $L(p, 1)$, Springer Proceedings in Mathematics and Statistics (PROMS),{\em Algebraic Modeling of Topological and Computational Structures and Application}, (2017) arXiv:1702.06290v1[math.GT].

\bibitem[DL4]{DL4} {\sc I. Diamantis, S. Lambropoulou}, An important step for the computation of the HOMFLYPT skein module of the lens spaces $L(p,1)$ via braids, to appear, arXiv:???[math.GT].

\bibitem[DL5]{DL5} {\sc I. Diamantis, S. Lambropoulou}, The HOMFLYPT skein module of the lens spaces $L(p,1)$ via braids, in preparation.

\bibitem[DLP]{DLP} {\sc I. Diamantis, S. Lambropoulou, J. H. Przytycki}, Topological steps on the HOMFLYPT skein module of the lens spaces $L(p,1)$ via braids, {\it J. Knot Theory and Ramifications}, {\em J. Knot Theory and Ramifications}, {\bf 25}, No. 14, (2016).


\bibitem[FG]{FG} {\sc M. Flores, D. Goundaroulis}, TFramization of a Temperley-Lieb algebra of type B.

\bibitem[GM]{GM} {\sc B. Gabrov\v sek, M. Mroczkowski}, The Homlypt skein module of the lens spaces $L(p,1)$, {\em Topology and its Applications}, {\bf 175} (2014), 72-80.

\bibitem[HK]{HK} {\sc J.~Hoste, M.~Kidwell}, Dichromatic link invariants, {\it Trans. Amer. Math. Soc.} {\bf 321} (1990), No. 1, 197-229.

\bibitem[HP]{HP} {\sc J.Hoste, J.H.Przytycki}, A survey of skein modules of 3-manifolds. {\it Knots 90 (Osaka, 1990)}, de Gruyter, Berlin, (1992) 363â?-379.

\bibitem[Jo]{Jo}{\sc V. F. R. Jones}, A polynomial invariant for links via Neumann algebras, {\it Bull. Amer. Math. Soc.} {129},
(1985) 103-112.

\bibitem[La1]{La1}{\sc S. Lambropoulou}, Knot theory related to generalized and cyclotomic Hecke algebras of type {\it B}, {\it J. Knot Theory and its Ramifications} {\bf 8}, No. 5, (1999) 621-658.

\bibitem[La2]{La2}{\sc S. Lambropoulou}, Knot theory related to generalized and cyclotomic
Hecke algebras of type {\it B}, {\it J. Knot Theory Ramifications} {\bf 8},
              No. 5, (1999) 621-658.

\bibitem[LR1]{LR1} {\sc S. Lambropoulou, C.P. Rourke} (2006), Markov's theorem in $3$-manifolds, \emph{Topology and its Applications} {\bf 78},
(1997) 95-122.

\bibitem[LR2]{LR2} {\sc S. Lambropoulou, C. P. Rourke}, Algebraic Markov equivalence for links in $3$-manifolds, {\em Compositio Math.} {\bf 142} (2006) 1039-1062.

\bibitem[P]{P} {\sc J.~Przytycki}, Skein modules of 3-manifolds, {\it Bull. Pol. Acad. Sci.: Math.}, {\bf 39, 1-2} (1991), 91-100.

\bibitem[Tu]{Tu} {\sc V.G.~Turaev}, The Conway and Kauffman modules of the solid torus,  {\it Zap. Nauchn. Sem. Lomi} {\bf 167} (1988), 79--89. English translation: {\it J. Soviet Math.} (1990), 2799-2805.

\end{thebibliography}
\end{document}